\tikzset{help lines/.style={step=#1cm,very thin, color=gray},
help lines/.default=.5} 
\tikzset{thick grid/.style={step=#1cm,thick, color=gray},
thick grid/.default=1} 
\numberwithin{figure}{section}
\numberwithin{table}{section}
\theoremstyle{definition}
\theoremstyle{plain}
\newcommand{\thistheoremname}{}
\newtheorem*{genericthm*}{\thistheoremname}
\newenvironment{namedthm*}[1]
  {\renewcommand{\thistheoremname}{#1}%
   \begin{genericthm*}}
  {\end{genericthm*}}
 \newtheorem{theorem}{Theorem}[section]
 \newtheorem{corollary}{Corollary}[section]
 \newtheorem{lemma}[theorem]{Lemma}
 \newtheorem{proposition}[theorem]{Proposition}
 \theoremstyle{definition}
 \theoremstyle{definition}
 \theoremstyle{remark}
 \newtheorem{remark}{Remark}[section]
\newtheorem{conjecture}[theorem]{\bf Conjecture}
 \numberwithin{equation}{section}
 \theoremstyle{remark}
 \newtheorem*{Notations*}{Notations}
 \newtheorem*{Proof*}{Proof}
\newcommand{\ben}{\begin{equation}}
\newcommand{\een}{\end{equation}}
\DeclareMathOperator{\Sl}{SL}
\DeclareMathOperator{\Span}{Span}
\DeclareMathOperator{\Hom}{Hom}
\DeclareMathOperator{\re}{Re}
\DeclareMathOperator{\sign}{sign}
\setlist[enumerate]{leftmargin=*,widest=0}
\setlist[itemize]{leftmargin=*,widest=0}
\def\subsection{\@startsection{subsection}{2}%
  \z@{.5\linespacing\@plus.7\linespacing}{.3\linespacing}%
  {\normalfont\bfseries}}
\def\subsubsection{\@startsection{subsubsection}{3}%
  \z@{.5\linespacing\@plus.7\linespacing}{.3\linespacing}%
  {\normalfont\bfseries}}
\title{Linear independence of periods for the symmetric square $L$-functions}
\subjclass[2020]{11F11, 11F67}
\keywords{}
\begin{document}
\author{Tianyu Ni}
\address{School of Mathematical and Statistical Sciences\\
Clemson University\\
Clemson, SC 29634-0975\\
USA}
\email{tianyuni1994math@gmail.com}
\author{Hui Xue}
\address{School of Mathematical and Statistical Sciences\\
Clemson University\\
Clemson, SC 29634-0975\\
USA}
\email{huixue@clemson.edu}
\begin{abstract} For $S_k$, the space of cusp forms of weight $k$ for the full modular group, we first introduce periods on $S_k$ associated to symmetric square $L$-functions. We then prove that for a fixed natural number $n$, if $k$ is sufficiently large relative to $n$, then any $n$ such periods are linearly independent. With some extra assumption, we also prove that for $k\geq e^{12}$, we can always pick up to $\frac{\log k}{4}$ arbitrary linearly independent periods.

\medskip

\noindent\textsc{R\'esum\'e.}
Pour $S_k$, l’espace des formes parabolique de poids $k$ pour le groupe modulaire complet, nous d’abord
introduire des p\'eriodes sur $S_k$ associ\'ees \`a des $L$-fonctions  carr\'ees sym\'etriques. Nous prouvons alors que pour un
nombre naturel fix\'e $n$, si $k$ est suffisamment grand par rapport \`a $n$, alors $n$ de telles p\'eriodes sont lin\'eaires
ind\'ependant. Avec quelques hypoth\`eses suppl\'ementaires, nous prouvons \'egalement que pour $k\ge e^{12}$, nous pouvons toujours choisir
jusqu'\`a  $\frac{\log k}4
$ p\'eriodes arbitraires lin\'eairement indépendantes.

\end{abstract}
\maketitle
\section{Introduction}
In a paper from 1979, Deligne \cite{Delignemotive} proposed a conjecture on the algebraicity of values of motivic $L$-functions at critical points in terms of motivic periods. A special family of examples are the symmetric power $L$-functions associated to modular forms. For each even integer $k\geq4$, let $M_k$ be the space of modular forms of weight $k$ for the full modular group $\Sl_2(\mathbb{Z})$, and let $S_k$ be its subspace of cusp forms. Let $f(z)=\sum_{n=1}^{\infty}a(n)e^{2\pi inz}\in S_k$ be a normalized Hecke eigenform such that $a(1)=1$. For each prime $p$, we denote by $\alpha_p$ and $\beta_p$ the roots of the polynomial
$$X^2-a(p)X+p^{k-1}.$$ 
Let $m\geq1$ be an integer. The $m^{{\rm th}}$ symmetric power $L$-function of $f$ for $\re(s)$ sufficiently large is defined as
\begin{align}\label{eq:symmetricpowerL}
    L_{m,f}(s)=\prod_{p}\prod_{0\leq i\leq m}(1-\alpha_p^{m-i}\beta_p^i\cdot p^{-s})^{-1},
\end{align}
and these functions are conjectured \cite{SerreconjectureaboutsymL} to have an analytic continuation to the whole complex plane and to satisfy the following functional equation for the completed $L$-function:
\begin{align}    L^{\ast}_{m,f}(s)&:=\gamma_m(s)\cdot L_{m,f}(s)=\pm L^{\ast}_{m,f}((k-1)m+1-s),\label{eq:functionalequation}\end{align}
    where $\gamma_m$ is given by
    \begin{align}
    \gamma_m(s):&=\begin{cases}(2\pi)^{-rs}\prod_{j=0}^{r-1}\Gamma(s-j(k-1))&m=2r-1,\\\pi^{-\frac{s}{2}}\Gamma\left(\frac{s}{2}-\lfloor\frac{
    r(k-1)}{2}\rfloor\right)\gamma_{2r-1}(s)&m=2r.\label{eq:gammafactor}
    \end{cases}
    \end{align}
See also \cite[p.177-178]{Zagier1977M} for details. 

In particular, if $m=1$, then \eqref{eq:functionalequation} is known as the functional equation of the standard $L$-function of the Hecke eigenform $f$ in $S_k$. The case $m=2$ was proved by Shimura \cite{functionalequationforL(Sym^2)}, where the functional equation \eqref{eq:functionalequation} is
\begin{align}
L^{\ast}_{2,f}(s)=2^{-s}\pi^{-3s/2}\Gamma(s)\Gamma\left(\frac{s-k+2}{2}\right)L_{2,f}(s)=L^{\ast}_{2,f}(2k-1-s).  \label{eq:functionalequationforsquareL}
\end{align}
Recently, Newton and Thorne \cite{Newton20211,Newton20212} showed that the analytic continuation holds true for all eigenforms $f$.
Deligne's conjecture for $L^{\ast}_{m,f}(s)$ at critical points  have been considered by various authors. For instance, the case $m=1$ was proved by Shimura \cite{specialvaluesShimura,periodsShimura} and Manin \cite{Manin1973} (independently), and the case $m=2$ was proved by Zagier \cite{Zagier1977M} and Sturm \cite{Sturmspecialvalues}.

We can view the values of $L^{\ast}_{m,f}(s)$ at the critical points as linear functionals on $f\in S_k$. Let $\mathcal{H}_k$ be the set of normalized Hecke eigenforms in $S_k$. For each $f=\sum_{g\in\mathcal{H}_k}c_g g\in S_k$ and a critical point $n+1$ for $L^{\ast}_{m,f}(s)$, we define
\begin{align} 
    \mathfrak{r}_{n,\vee^m}(f):=\sum_{g\in\mathcal{H}_k}c_gL^{\ast}_{m,g}(n+1),\label{eq:defofperiods}
\end{align}
which makes $\mathfrak{r}_{n,\vee^m}$ an element in the dual space $S_{k}^{\ast}:=\Hom(S_{k},\mathbb{C})$.
These linear functionals are expected to carry interesting information. For example, if $m=1$, then 
$$\mathfrak{r}_{n,\vee^1}(f)=\int_{0}^{\infty}f(it)t^ndt=\frac{n!}{(2\pi)^{n+1}}L(f,n+1)$$
is the $n^{{\rm th}}$ period of $f$, see \cite{Kohnen1984}. These periods are subject to many linear dependence relations, called the Eichler-Shimura relations \cite{Manin1973}; more precisely, for $0\leq t\leq k-2$, we have
\begin{align}
    \quad \mathfrak{r}_t(f)+(-1)^{t}\mathfrak{r}_{k-2-t}(f)=0,\label{eq:ES1}\tag{ES1}
    \\\quad(-1)^{t}\mathfrak{r}_t(f)+\sum_{\substack{0\leq m\leq t\\m\equiv0\mod2}}\binom{t}{m}\mathfrak{r}_{k-2-t+m}(f)+\sum_{\substack{0\leq m\leq k-2-t\\m\equiv t\mod2}}\binom{k-2-t}{m}\mathfrak{r}_m(f)=0,\label{eq:ES2}\tag{ES2}
    \\\quad\sum_{\substack{1\leq m\leq t\\m\equiv1\mod2}}\binom{t}{m}\mathfrak{r}_{k-2-t+m}(f)+\sum_{\substack{0\leq m\leq k-2-t\\m\not\equiv t\mod2}}\binom{k-2-t}{m}\mathfrak{r}_{m}(f)=0,\label{eq:ES3}\tag{ES3}
    \end{align}
where $\mathfrak{r}_{t}(f)=i^{t+1}\mathfrak{r}_{t,\vee^1}(f)$. But not much seems to be known about the linear relations among $\mathfrak{r}_{n,\vee^m}$ for $m\geq2$ since the corresponding motives are mysterious. 

Another interesting aspect of these functionals or periods is the linear independence of a subset of $\{\mathfrak{r}_{n,\vee^m}\}_n$. For example, even in the case of $m=1$, it is not simple to know whether a certain set of periods is linearly independent or not by only looking at the period relations \eqref{eq:ES1}, \eqref{eq:ES2} and \eqref{eq:ES3}. 
Recently, when $m=1$, Lei et al. \cite{oddperiods,evenperiods} have provided some evidence for the linear independence of odd periods and even periods for modular forms, respectively. Furthermore, the linear independence between one odd and one even periods has been addressed in \cite{Xueoddandevenperiods}.
In this paper, when $m=2$, we shall extend the ideas of \cite{oddperiods,evenperiods} to provide some evidence for the linear independence of a subset of the periods $\{\mathfrak{r}_{n,\vee^2}\}_n$. First, note that the critical points for the symmetric square $L$-function $L^{\ast}_{2,f}(s)$ are odd integers in the set $\{1,2,\cdots,k-1\}$ and even integers in the set $\{k,k+1,\cdots, 2k-2\}$; see Hida \cite[p.~96]{Hida1990}. Thus, via the definition \eqref{eq:defofperiods} and the functional equation \eqref{eq:functionalequationforsquareL}, we will focus on  the ``critical" periods $\mathfrak{r}_{2\ell,\vee^2}$ for integers $0\le \ell\le \frac{k-2}{2}$. Also, due to Lemma \ref{lem:integralperiods}, we will need to confine ourselves to periods $\mathfrak{r}_{2\ell,\vee^2}$ for $0\le\ell\le\frac{k-4}{2}$.

We now state our results. 
\begin{theorem}\label{thm:asymtotic}
Let $n\geq1$ be an integer. For sufficiently large $k$, if $0\leq\ell_1<\ell_2<\cdots<\ell_n\leq\frac{k-4}{2}$  are integers, then the set of periods $\{\mathfrak{r}_{2\ell_i,\vee^2}\}_{i=1}^n$ is linearly independent. 
\end{theorem}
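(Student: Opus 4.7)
My plan is to realize each functional $\mathfrak{r}_{2\ell,\vee^2}$ concretely as an explicit linear combination of Fourier coefficients of $f$, using the integral expression supplied by Lemma \ref{lem:integralperiods} together with the Rankin--Selberg/Shimura integral representation underlying \eqref{eq:functionalequationforsquareL}. This should give an identity of the shape
\[\mathfrak{r}_{2\ell,\vee^2}(f) = \sum_{m\ge 1} a_f(m)\, A^{(\ell,k)}_m\]
for all $f\in S_k$, with explicit coefficients $A^{(\ell,k)}_m$ depending on $\ell$, $k$, and $m$. A hypothetical linear relation $\sum_{i=1}^n c_i\,\mathfrak{r}_{2\ell_i,\vee^2}=0$ on $S_k$ then translates into the vanishing of the quantity $\sum_i c_i A^{(\ell_i,k)}_m$ against every Fourier coefficient pattern realized by a cusp form in $S_k$.

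To exploit this, I would test the relation against the Poincar\'e series $P_{m_1,k},\dots,P_{m_n,k}\in S_k$ for a convenient choice of indices $m_1,\dots,m_n$ (for instance, the first $n$ positive integers, or the first $n$ primes). Since $\langle f,P_{m,k}\rangle$ picks off $a_f(m)$ up to a nonzero explicit normalization, the problem reduces to showing that the $n\times n$ matrix
\[M(k) := \bigl(A^{(\ell_i,k)}_{m_j}\bigr)_{1\le i,j\le n}\]
is invertible for $k$ sufficiently large in terms of $n$. I would attack this by isolating the dominant contribution to $A^{(\ell,k)}_m$ as $k\to\infty$ with $\ell$ and $m$ fixed: a careful expansion of the $\Gamma$-factors from \eqref{eq:gammafactor} evaluated at $s=2\ell+1$ should produce a leading term of the form $C_k\cdot\Phi(\ell,m)$ in which the $\ell$-dependence, plugged into a well-chosen set of $m_j$'s, yields a generalized Vandermonde-type determinant that is nonzero independently of $k$.

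The main obstacle I anticipate is uniformity and sharpness of the error estimate. One needs to bound the subleading contributions to $A^{(\ell,k)}_m$ uniformly for $0\le\ell\le(k-4)/2$ and $m\le m_n$, sharp enough that, after normalizing by the common factor $C_k$, the remainder matrix is dominated by the Vandermonde-type main term as $k\to\infty$. This in turn rests on Deligne's bound for the Fourier coefficients of the cusp forms (or divisor-type bounds for the coefficients of the Eisenstein input) appearing inside the integral representation, combined with Stirling estimates on the $\Gamma$-ratios. Once such uniform control is in place, continuity of the determinant in its entries forces $\det M(k)\neq 0$ for all $k$ large relative to $n$, which is exactly the statement of the theorem. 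The strength of this uniformity is also what presumably governs the effective threshold behind the $k\ge e^{12}$, $n\le\tfrac{\log k}{4}$ quantitative version mentioned in the abstract.
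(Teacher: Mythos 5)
Your overall skeleton does match the paper: the periods are dualized into explicit modular forms via Lemma \ref{lem:integralperiods} (this is Proposition \ref{prop:equivalence}), and linear independence is reduced to the nonsingularity of an $n\times n$ matrix of Fourier coefficients whose main term is a Vandermonde-type determinant beating uniform error bounds. But two of your concrete choices would fail as stated. First, the test indices. The coefficients of the dual forms are $\sum_{t^2\le 4m}P_{k,r}(t,m)H(r,4m-t^2)$ (see \eqref{eq:defofc_k,r}), and the clean asymptotic $a_{k,r}(m)\sim m^{(k-r-1)/2}$ holds \emph{only when $m$ is a perfect square}: the dominant term is the boundary term $t^2=4m$, where $P_{k,r}(2m^{1/2},m)=\binom{k+r-2}{2r-1}m^{(k-r-1)/2}$ (Lemma \ref{lem:P_k,rperfectsquare}) is larger than all other terms by a factor of roughly $(k-1)^r$, while for non-square $m$ (your ``first $n$ integers or first $n$ primes'') no such dominant term exists and the limit structure you need disappears. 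The paper moreover takes the squares in geometric progression, $m_j=4^{j-1}$, precisely so that the main term is an honest Vandermonde matrix with nodes $4^{\ell_i}$, for which an explicit lower bound on the determinant relative to the largest permutation product is available; with nodes $j^2$ one gets a generalized Vandermonde matrix, and the lack of a usable quantitative lower bound for it is exactly the open issue raised in the paper's final section. Relatedly, the decisive estimates are on the polynomials $P_{k,r}(t,m)$ and the generalized Hurwitz class numbers $H(r,N)$ (Lemmas \ref{lem:boundP_k,r}--\ref{lem:zetavaluesatoddnegative}), not on Stirling expansions of the $\Gamma$-factors at $s=2\ell+1$ nor on Deligne's bound, which plays no role.

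Second, your closing step, ``continuity of the determinant forces $\det M(k)\ne 0$ for large $k$,'' does not work, because the exponents $\ell_1<\cdots<\ell_n$ are not fixed: they may grow with $k$ (up to $(k-4)/2$), so the normalized matrix does not converge to a single fixed nonsingular matrix as $k\to\infty$. What is actually needed, and what the paper supplies, is a quantitative comparison valid uniformly in the $\ell_i$: a lower bound $\det\mathscr{A}_{k;\ell_1,\dots,\ell_n}\ge(3/4)^{n(n-1)/2}\prod_j 4^{\ell_j(j-1)}$ together with the fact that every permutation product is at most $(4/3)^{n(n-1)/2}\det\mathscr{A}_{k;\ell_1,\dots,\ell_n}$, combined with the uniform error bound $|\delta_{k,m,r}|\le 5(32\pi/(k-1))^r m^{2r}\log 4m$ of Proposition \ref{prop:estimateofFourierco} and the perturbation criterion of Proposition \ref{prop:criterionofnonsingularity}. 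Until you replace the soft limiting argument by such uniform-in-$\ell$ quantitative bounds, and restrict the test indices to perfect squares (preferably powers of a fixed square), the proof is incomplete.
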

\begin{remark}
\begin{enumerate} 
\item In Proposition \ref{prop:nonsingularityasymptotic}, we give an explicit lower bound of $k$ in Theorem \ref{thm:asymtotic}.
\item We will also prove the following complete result for $n=2$.
\end{enumerate}
\end{remark}
\begin{theorem}\label{thm:twolinearindependent}Let $0\leq\ell_1<\ell_2\leq\frac{k-4}{2}$ be integers. If $\dim S_{k}\geq 2$, then $\mathfrak{r}_{2\ell_1,\vee^2}$ and $\mathfrak{r}_{2\ell_2,\vee^2}$ are linearly independent.   
\end{theorem}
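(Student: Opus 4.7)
The plan is to adapt the kernel-function strategy of \cite{oddperiods, evenperiods} from the standard $L$-function to the symmetric-square setting. Building on the integral representations of Zagier \cite{Zagier1977M} and Sturm \cite{Sturmspecialvalues}, one should be able to construct, for each integer $\ell$ with $0\le \ell\le (k-4)/2$, an explicit cusp form $R_{2\ell} \in S_k$ and a non-zero normalising constant $\kappa_{\ell,k}$ such that
\[
\mathfrak{r}_{2\ell, \vee^2}(f) \;=\; \kappa_{\ell,k}\,\langle f, R_{2\ell} \rangle \qquad \text{for all } f \in S_k,
\]
where $\langle\cdot,\cdot\rangle$ denotes the Petersson pairing on $S_k$. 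Non-degeneracy of the Petersson pairing then translates the desired linear independence of $\mathfrak{r}_{2\ell_1, \vee^2}$ and $\mathfrak{r}_{2\ell_2, \vee^2}$ as functionals on $S_k$ into the $\mathbb{C}$-linear independence of $R_{2\ell_1}$ and $R_{2\ell_2}$ in $S_k$.

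For the second step I would compute the first two Fourier coefficients $a_1(R_{2\ell})$ and $a_2(R_{2\ell})$ in closed form from the kernel construction---each comes out as a finite sum of products of gamma, binomial, and divisor-power terms in $k$ and $\ell$---and show that the $2\times 2$ matrix
\[
M(\ell_1, \ell_2) \;=\; \begin{pmatrix} a_1(R_{2\ell_1}) & a_2(R_{2\ell_1}) \\ a_1(R_{2\ell_2}) & a_2(R_{2\ell_2}) \end{pmatrix}
\]
is non-singular whenever $\ell_1 \neq \ell_2$. A non-zero determinant forces $R_{2\ell_1}$ and $R_{2\ell_2}$ to be linearly independent in $S_k$, and the hypothesis $\dim S_k \geq 2$ is what ensures that such a linearly independent pair can actually fit inside the ambient space.

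The main obstacle is establishing $\det M(\ell_1,\ell_2) \neq 0$ for every admissible pair $0 \le \ell_1 < \ell_2 \le (k-4)/2$. After cancelling common gamma and binomial prefactors, this should reduce to showing that the ratio $a_2(R_{2\ell})/a_1(R_{2\ell})$ is an injective function of $\ell$ on the admissible range, with $k$ regarded as a fixed parameter. I would attempt this by writing the ratio explicitly as a rational function of $\ell$ and then proving injectivity either by strict monotonicity (via a sign analysis of its derivative), or by identifying the cross-difference $a_1(R_{2\ell_1}) a_2(R_{2\ell_2}) - a_1(R_{2\ell_2}) a_2(R_{2\ell_1})$ with a non-zero polynomial that has no integer roots in the admissible window. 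This is the point where the symmetric-square arithmetic---the divisor sums and the powers of $2$ coming from the gamma factor in \eqref{eq:gammafactor}---enters most directly, and is where the bulk of the technical work will sit.
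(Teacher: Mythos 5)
Your first reduction is exactly the paper's: the kernel forms you call $R_{2\ell}$ are the Cohen--Zagier forms $c_{k,k-1-2\ell}$ of \eqref{eq:defofc_k,r}, and the passage from linear independence of the functionals $\mathfrak{r}_{2\ell_i,\vee^2}$ to linear independence of these cusp forms via Lemma \ref{lem:integralperiods} and nondegeneracy of the Petersson pairing is Proposition \ref{prop:equivalence}. The gap is in the second step, which is precisely the hard part of the theorem. The Fourier coefficients of $c_{k,r}$ are not ``finite sums of gamma, binomial, and divisor-power terms'': the $m$th coefficient is $\sum_{t^2\le 4m}P_{k,r}(t,m)H(r,4m-t^2)$, and for $m=1,2$ the class numbers $H(r,3),H(r,4),H(r,7),H(r,8)$ are generalized Bernoulli numbers $L(1-r,\chi_D)$ for several discriminants $D$, with $r=k-1-2\ell$ varying over the whole range $3\le r\le k-1$. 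There is no rational-function-in-$\ell$ structure for the ratio $a_2/a_1$, and no monotonicity or ``no integer roots'' argument is available; asserting that the cross-determinant is a nonzero polynomial with no admissible roots is, in effect, assuming the conclusion. Moreover your choice of the non-square index $m=2$ is analytically worse than the paper's: only at perfect squares does the coefficient have a clean dominant term $\binom{k+r-2}{2r-1}m^{(k-r-1)/2}H(r,0)$ (the $t^2=4m$ contribution), which is what makes any asymptotic control possible (Proposition \ref{prop:estimateofFourierco}).

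For comparison, the paper cannot prove the determinant nonvanishing uniformly in $k$ either. It proves it asymptotically: Proposition \ref{prop:nonsingularityasymptotic} with $n=2$, using the coefficients at the perfect squares $1$ and $4$ and explicit bounds on the class-number error terms, gives nonsingularity only for even $k\gtrsim 9881$; the remaining weights $12\le k\le 9882$ (excluding $k=14$) are then handled by a direct SageMath verification of the nonsingularity of the $2\times 2$ coefficient matrix \eqref{eq:matrixcononormalization} for \emph{all} admissible pairs $(\ell_1,\ell_2)$. So to complete your argument you would need either a genuinely new nonvanishing theorem for these class-number combinations valid for all $k\ge 12$ (which would in fact strengthen the paper), or you must supply the same two ingredients the paper uses: quantitative error estimates making the determinant positive for large $k$, plus a finite computation covering the small and moderate weights.
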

If we require that the periods $\{\mathfrak{r}_{2\ell_i,\vee^2}\}_{i=1}^n$ are ``far away" from the ``central" period $\mathfrak{r}_{\frac{k-4}{2},\vee^2}$, for example,  if $0\leq\ell_1<\ell_2<\cdots<\ell_n\leq\frac{k-2}{4}$, then we can pick up to $\frac{\log k}{4}$ linearly independent periods. More precisely, we proved the following result.
\begin{theorem}\label{thm:firstlogk}
    Let $n\geq1$ be an integer. 
    For $k\geq e^{12}$, if $0\leq\ell_1<\ell_2<\cdots<\ell_{n}\leq\frac{k-2}{4}$ are integers and $n\leq\frac{\log k}{4}$, then the set of periods $\{\mathfrak{r}_{2\ell_i,\vee^2}\}_{i=1}^{n}$ is linearly independent.
\end{theorem}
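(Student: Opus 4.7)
The plan is to prove linear independence of $\{\mathfrak{r}_{2\ell_i,\vee^2}\}_{i=1}^n$ by constructing, for each admissible tuple $\ell_1<\cdots<\ell_n$, an $n\times n$ matrix $M=(\mathfrak{r}_{2\ell_i,\vee^2}(g_j))_{i,j}$ on a carefully chosen collection of test cusp forms $g_1,\dots,g_n\in S_k$, and showing that $\det M\neq 0$. Evaluating any purported linear relation $\sum_i a_i\mathfrak{r}_{2\ell_i,\vee^2}=0$ on each $g_j$ yields $Ma=0$, so nonsingularity of $M$ forces $a=0$. This is the same matrix strategy that underlies Theorem \ref{thm:asymtotic} and Proposition \ref{prop:nonsingularityasymptotic}; the point of the present theorem is to make every bound in that argument effective.

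For the test functions, the natural candidates are Poincar\'e series $g_j=P_{m_j,k}$ with a carefully ordered choice of indices $m_1<\cdots<m_n$, since Petersson duality turns each pairing $\langle f,P_{m,k}\rangle$ into access to the $m$-th Fourier coefficient of $f$. Using the integral/Rankin--Selberg representation of $\mathfrak{r}_{2\ell,\vee^2}$ supplied by Lemma \ref{lem:integralperiods}, one rewrites each entry $\mathfrak{r}_{2\ell_i,\vee^2}(P_{m_j,k})$ as an explicit \emph{main term}, arising from the leading (constant-term / diagonal) contribution in the expansion and depending in a controlled way on $\ell_i$ and $m_j$, plus a residual \emph{error term} coming from Kloosterman/Bessel-function contributions. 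With the $m_j$ selected so that the main term makes $M$ essentially triangular or diagonal, nonsingularity reduces to verifying strict diagonal dominance, $|M_{ii}|>\sum_{j\neq i}|M_{ij}|$ for every $i$.

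Both hypotheses enter decisively at the dominance step. The restriction $\ell_i\leq(k-2)/4$ keeps the critical points $2\ell_i+1$ bounded away from the center of symmetry $s=k-\tfrac{1}{2}$, so the $\Gamma$-factors and $\zeta$-factors in the main term do not degenerate and one obtains a large exponential gap between main terms and error terms. The bound $n\leq(\log k)/4$, combined with the base threshold $k\geq e^{12}$, then controls the accumulated error: an off-diagonal bound of the shape $C^{-k/4}$ with an explicit constant $C>1$, summed over at most $n$ rows, remains strictly below the diagonal entries precisely when $e^{4n}\leq k$ and $k$ is large enough for the absolute constants to align.

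The main obstacle I anticipate is the fully effective bookkeeping of constants. Unlike the asymptotic Theorem \ref{thm:asymtotic}, here the allowed $n$ itself grows with $k$, so every estimate on $\Gamma$-factors, binomial coefficients, Kloosterman sums, and Petersson normalizations must be uniform and numerically precise. The final step should reduce to a single numerical inequality of the form $n\cdot(\text{error constant})\cdot C^{-k/4}<(\text{diagonal constant})$, engineered so that it is satisfied exactly under the hypotheses $n\leq(\log k)/4$ and $k\geq e^{12}$; choosing the threshold $e^{12}$ in the statement suggests that the constants align only after a modest but nontrivial amount of tracking.
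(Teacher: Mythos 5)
Your overall frame (turn a purported relation $\sum_i a_i\mathfrak{r}_{2\ell_i,\vee^2}=0$ into $Ma=0$ for a matrix of values on test forms, then prove nonsingularity) is the same matrix strategy the paper uses, and pairing against Poincar\'e series is a legitimate repackaging of the paper's Proposition \ref{prop:equivalence}: since $\mathfrak{r}_{2\ell_i,\vee^2}(f)$ is proportional to $\langle f,c_{k,k-1-2\ell_i}\rangle$, the entry $\mathfrak{r}_{2\ell_i,\vee^2}(P_{m_j,k})$ is just (a nonzero constant times) the $m_j$-th Fourier coefficient of $c_{k,k-1-2\ell_i}$. But at that point your proposal stops before the actual content. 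That Fourier coefficient is the finite sum $\sum_{t^2\le 4m_j}P_{k,r}(t,m_j)H(r,4m_j-t^2)$ of Hurwitz class-number terms; there are no Kloosterman sums or Bessel functions anywhere in this problem, so the promised ``main term plus Kloosterman/Bessel error'' decomposition has no basis. The key idea you are missing is the paper's: take the $m_j$ to be perfect squares (the paper uses $m_j=4^{j-1}$), because only then does the boundary term $t=2\sqrt{m_j}$ produce the dominant contribution $\binom{k+r-2}{2r-1}m_j^{(k-r-1)/2}H(r,0)$ with $H(r,0)=\zeta(1-2r)$ (Lemmas \ref{lem:P_k,rperfectsquare} and \ref{lem:zetavaluesatoddnegative}), while the interior terms are controlled by the explicit bounds of Lemmas \ref{lem:boundP_k,r} and \ref{lem:boundsforclassnumber}, yielding the effective asymptotic $a_{k,r}(m)=m^{(k-r-1)/2}(1+\delta_{k,m,r})$ with $|\delta_{k,m,r}|\le 5\bigl(\tfrac{32\pi}{k-1}\bigr)^r m^{2r}\log 4m$ (Proposition \ref{prop:estimateofFourierco}). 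Without this there is no identified main term and no quantitative error.

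Your structural goal is also wrong: after normalization the main terms are $m_j^{\ell_i}$, so the limit matrix is a (generalized) Vandermonde matrix, which is neither diagonal nor triangular, and strict diagonal dominance fails badly (within a row the entries grow with $m_j$). The paper instead uses the perturbation criterion of Proposition \ref{prop:criterionofnonsingularity}, comparing $\sup_\tau|f(\varepsilon_{k,1,\tau(1)},\dots,\varepsilon_{k,n,\tau(n)})|$ with $(3/4)^{n(n-1)/2}(n!)^{-1}$, and the whole difficulty of Theorem \ref{thm:firstlogk} is the calibration when $n$ grows like $\tfrac{\log k}{4}$: one needs Stirling to lower-bound $(3/4)^{n(n-1)/2}(n!)^{-1}$ by a power $k^{-c k}$ and needs the hypotheses to make the errors smaller still. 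Here the role of $\ell_i\le\tfrac{k-2}{4}$ is not that the $\Gamma$- or $\zeta$-factors ``do not degenerate near the center''; it is simply that $r=k-1-2\ell_i\ge k/2$, so each error is at most $20\bigl(32\pi\cdot 4^{13(j-1)/6}/(k-1)\bigr)^{r}\le k^{-0.115\cdot k/2}$ once $j\le n\le\tfrac{\log k}{4}$ and $k\ge e^{12}$, which beats the combinatorial loss $n!\,(4/3)^{n(n-1)/2}$ (Lemma \ref{prop:boundofsup} and Proposition \ref{prop:nonsingularfarfromcenter}). If you tried instead to force near-triangularity by letting the $m_j$ grow very fast, the factor $m^{2r}\log 4m$ in the error bound would destroy the estimate, so the choice of test indices cannot be left unspecified. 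As written, the proposal is a plausible outline of the known strategy but omits the perfect-square mechanism, the Vandermonde-perturbation criterion, and the quantitative interplay between $n\le\tfrac{\log k}{4}$, $\ell_i\le\tfrac{k-2}{4}$ and $k\ge e^{12}$, which are precisely the proof.
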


We sketch the main idea of the proofs. A natural idea is to consider the ``dual'' of the periods, which are certain modular forms. Thanks to the work of  Cohen \cite{Cohen'smodularformC_k} and Zagier \cite{Zagier1977M}, we can find the integral representation of $\mathfrak{r}_{2\ell,\vee^2}$ and reduce the linear independence among the periods $\{\mathfrak{r}_{2\ell_i,\vee^2}\}_{i=1}^n$  to the linear independence among modular forms $\{c_{k,k-1-2\ell_i}\}_{i=1}^n$ defined in \eqref{eq:defofc_k,r}. Next, following the idea of \cite{oddperiods} and \cite{evenperiods}, we consider the matrix formed by the Fourier coefficients of $\{c_{k,k-1-2\ell_i}\}_{i=1}^n$, which reduces the linear independence among $\{c_{k,k-1-2\ell_i}\}_{i=1}^n$ to the nonsingularity of the matrix.

The paper is organized as follows. In Section \ref{sect:integralrepofperiods}, we state the integral representation of the periods and give the equivalence between the linear independence among periods and the linear independence among the corresponding modular forms. In Section \ref{sect:Prelim}, we first provide some preparatory lemmas that will be used to estimate the Fourier coefficients of $c_{k,r}$.  Then we give the asymptotics of the Fourier coefficient of $c_{k,r}$. In Section \ref{sect:matrix}, we define the matrix that is formed by the Fourier coefficients of $\{c_{k,k-1-2\ell_i}\}_{i=1}^n$. Moreover, we give a criterion for the nonsingularity of this matrix. In Section \ref{sect:proofs}, we prove Theorems \ref{thm:asymtotic} and \ref{thm:firstlogk} by showing the corresponding  matrices are nonsingular. In the last section, we discuss a potential way to improve Theorems \ref{thm:asymtotic} and \ref{thm:firstlogk} and propose a related conjecture.

\section{Integral representation of the periods}\label{sect:integralrepofperiods}
Following Cohen \cite{Cohen'smodularformC_k} and Zagier \cite{Zagier1977M}, we recall the integral representation of the periods $\mathfrak{r}_{2\ell,\vee^2}$ \eqref{eq:defofperiods}. Let $k\geq4$ be an even integer and $r$ be an odd integer such that $3\leq r\leq k-1$. Define
\begin{align}
    c_{k,r}(z)=\sum_{m=0}^{\infty}\left(\sum_{t\in\mathbb{Z},t^{2}\leq4m}^{\infty}P_{k,r}(t,m)H(r,4m-t^2)\right)e^{2\pi imz},\label{eq:defofc_k,r}
\end{align}
where $P_{k,r}(t,m)$ is the polynomial defined by \cite[p.114]{Zagier1977M}
\begin{align}
    P_{k,r}(t,m)={\rm coefficient~of~}x^{k-r-1}~{\rm in~}\frac{1}{(1-tx+mx^2)^{r}}.
\end{align}
Note that $P_{k,r}(t,m)=P_{k,r}(-t,m)$.  The generalized Hurwitz class number $H(r,N)$ is defined as follows (see \cite[p.272-273]{Cohen'smodularformC_k}).  Let $r$ and $N$ be non-negative integers with $r\geq1$. For $N\geq1$, define
\begin{align}
    h(r,N)=\begin{cases}
        \frac{(-1)^{[r/2]}(r-1)!N^{r-1/2}}{2^{r-1}\pi^r}L(r,\chi_{(-1)^rN}) &(-1)^rN\equiv0,1\pmod 4,\\
        0&N\equiv 2,3\pmod4,
    \end{cases}
\end{align}
where we write $\chi_D$ for the quadratic character given by the Kronecker symbol $\chi_{D}(d)=(\frac{D}{d})$. For $N\geq0$, we define
\begin{align}
    H(r,N)=\begin{cases}
        \sum_{d^2\mid N}h(r,N/d^2)&(-1)^rN\equiv0,1\pmod4, N>0,\\
        \zeta(1-2r)&N=0,\\
        0&{\rm otherwise}.
    \end{cases}
\end{align}
If we set $(-1)^rN=Df^2$ with $D$ a fundamental discriminant (we allow $D=1$ as a fundamental discriminant), then
\begin{align}
    H(r,N)&=L(1-r,\chi_D)\sum_{d\mid f}\mu(d)\chi_D(d)d^{r-1}\sigma_{2r-1}(f/d)\\&=\frac{(-1)^{(r-1)/2}(r-1)!}{2^{r-1}\pi^r}N^{r-1/2}L(r,\chi_D)\sum_{d\mid f}\mu(d)\chi_D(d)d^{r-1}\sigma_{2r-1}(f/d),\label{eq:defofHurwitzclassnumber}
\end{align}
where $\mu$ is the M\"{o}bius function and $\sigma_{2r-1}(n)=\sum_{d>0,d\mid n}d^{2r-1}$.

\begin{proposition}$($\cite[Theorem 6.2]{Cohen'smodularformC_k}, \cite[p.114]{Zagier1977M}$)$
    Let $k\geq4$ be an even integer and $r$ be an odd integer such that $3\leq r\leq k-1$. Then the function $c_{k,r}$ in \eqref{eq:defofc_k,r} is a modular form of weight $k$ for the full modular group. If $r<k-1$, it is a cusp form.\end{proposition}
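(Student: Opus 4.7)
The plan is to identify $c_{k,r}$ as the image of a Cohen--Eisenstein series under a Shimura--Shintani type theta lift, and then to verify the cusp form condition by a direct computation of the constant term.

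The first input is Cohen's theorem \cite{Cohen'smodularformC_k}: the generating series
\[
\mathcal{H}_{r}(\tau) := \sum_{N\geq 0} H(r,N)\,q^{N}
\]
is a modular form of half-integer weight $r+\tfrac12$ on $\Gamma_{0}(4)$ (lying in the Kohnen plus-space). This supplies the arithmetic factor $H(r,4m-t^{2})$ that appears in $c_{k,r}$.

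The second input is the interpretation of the polynomial weight $P_{k,r}(t,m)$. Its generating function $(1-tx+mx^{2})^{-r}$ is the classical Gegenbauer generating function, and such polynomials are precisely the kernels that pair half-integer weight $r+\tfrac12$ forms with integer weight $k$ forms via the Shimura--Shintani correspondence; equivalently, they arise from the theta decomposition of a Jacobi form of weight $k$ and index $1$. I would therefore introduce a theta kernel $K_{k,r}(\tau,z)$ whose Fourier expansion in $z$ reads off $P_{k,r}(t,m)$ as coefficients, and show that the Petersson-type pairing of $\mathcal{H}_{r}(\tau)$ with $K_{k,r}(\tau,z)$ over the $\tau$-variable produces $c_{k,r}(z)$. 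The weight-$k$ modularity of $c_{k,r}$ for $\Sl_2(\mathbb{Z})$ then follows formally: the half-integer automorphy factors coming from $\mathcal{H}_{r}$ and from the theta piece of $K_{k,r}$ cancel, leaving the desired $(cz+d)^{k}$. An alternative, more hands-on route is the one followed by Zagier \cite{Zagier1977M}: realize $c_{k,r}$ as the holomorphic projection of a non-holomorphic Eisenstein-type kernel attached to the symmetric square $L$-function, from which modularity is automatic.

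The principal technical obstacle is verifying the $S$-transformation law, i.e. $c_{k,r}|_{k}S=c_{k,r}$; translation invariance under $T$ is visible from the $q$-expansion. The $S$-transformation requires a Poisson summation on the Gegenbauer side together with the absorption of the Gauss sums produced by the half-integral weight transformation of $\mathcal{H}_{r}$. Either one organizes this computation inside the theta-kernel formalism above, or one executes Zagier's direct contour-shifting argument.

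For the cusp form statement, the constant term of $c_{k,r}$ receives a contribution only from $m=0$, which forces $t=0$, giving
\[
a(0) \;=\; P_{k,r}(0,0)\cdot H(r,0) \;=\; \bigl([x^{k-r-1}]\,1\bigr)\cdot \zeta(1-2r).
\]
For $r<k-1$ the exponent $k-r-1$ is positive, so the bracketed coefficient vanishes; since $\Sl_2(\mathbb{Z})$ has a unique cusp, vanishing at $i\infty$ is enough to conclude that $c_{k,r}$ is a cusp form.
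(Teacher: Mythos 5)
Your overall architecture is the right one, and it is essentially the route behind the sources from which the paper quotes this proposition (the paper itself gives no proof; it cites Cohen's Theorem~6.2 and Zagier \cite{Zagier1977M}): start from the modularity of the Cohen--Eisenstein series $\mathcal{H}_r(\tau)=\sum_{N\ge0}H(r,N)q^N$ of weight $r+\tfrac12$ in the plus space, and convert it into a level-one form of weight $k$ through the Gegenbauer coefficients $P_{k,r}(t,m)$. Your cuspidality argument is also correct and is the standard one: for $m=0$ only $t=0$ contributes, $P_{k,r}(0,0)$ is the coefficient of $x^{k-r-1}$ in $1$, hence zero when $r<k-1$, and at level one the vanishing of the constant term at the unique cusp suffices.

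The genuine gap is that the entire content of the proposition --- weight-$k$ modularity of $c_{k,r}$ --- is exactly the step you defer as ``the principal technical obstacle'' and never carry out, and the shortcut you assert is not valid as stated. It is not true that modularity ``follows formally'' from a cancellation of half-integral automorphy factors: in the Jacobi-form picture the relevant Jacobi form has weight $r+1$ and index $1$ (not weight $k$, as you write), and its naive Taylor coefficients in $z$ are \emph{not} modular; one needs the corrected development coefficients (heat-operator corrections, i.e.\ the maps sending a Jacobi form of weight $r+1$ to forms of weight $r+1+2\nu$ with $2\nu=k-r-1$), and the point of Cohen's lemma underlying \cite{Cohen'smodularformC_k} is precisely that the combination $\sum_t P_{k,r}(t,m)H(r,4m-t^2)$ implements this correction. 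Two further inaccuracies in the sketch would need repair: $c_{k,r}$ arises from $\mathcal{H}_r$ by a formal pairing of Fourier coefficients, not by a Petersson-type integral over $\tau$ (such an integral would require a non-holomorphic kernel together with convergence and unfolding arguments you do not supply), and the mechanism is not the Shimura--Shintani correspondence, which sends weight $r+\tfrac12$ to weight $2r$, not to an arbitrary weight $k$. So either invoke Cohen's theorem as the paper does, or actually perform the $S$-transformation/development-coefficient computation; as written, modularity is asserted rather than proved.
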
 
    The following lemma gives an integral representation of the periods. It is the reason why we need to exclude the period $\mathfrak{r}_{k-2,\vee^2}$ (or equivalently $\mathfrak{r}_{k-1,\vee^2}$); see the remark before Theorem \ref{thm:asymtotic}.
\begin{lemma}$($\cite[Theorem 2]{Zagier1977M}$)$ \label{lem:integralperiods}
    Let $r,k$ be integers with $3\leq r\leq k-1$, $r$ odd, $k$ even. The Petersson inner product of the modular form $c_{k,r}$ with a  Hecke eigenform $f\in S_k$ is given by
    \begin{align}
        \langle f,c_{k,r}\rangle=-\frac{(r+k-2)!(k-2)!}{(k-r-1)!}\frac{1}{4^{r+k-2}\pi^{2r+k-1}}L_{2,f}(r+k-1).\label{eq:integralperiods}
    \end{align}
\end{lemma}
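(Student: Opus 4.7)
The plan is to follow Zagier's Rankin--Selberg argument, viewing $c_{k,r}$ as a kernel that extracts symmetric square $L$-values from cusp forms via the Petersson pairing. The central auxiliary object is Cohen's half-integral weight Eisenstein series $\mathcal{C}_r$ of weight $r+\tfrac12$ on $\Gamma_0(4)$, whose $N$-th Fourier coefficient is exactly $H(r,N)$; this is what puts the generalized Hurwitz numbers into the definition \eqref{eq:defofc_k,r} in the first place.

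Step 1: rewrite $c_{k,r}$ as a holomorphic projection. The generating identity $\sum_k P_{k,r}(t,m)x^{k-r-1}=(1-tx+mx^2)^{-r}$ identifies $P_{k,r}(t,m)$ with a Gegenbauer-type polynomial value, and this is precisely the coefficient that appears when one applies the Sturm-type holomorphic projection (to weight $k$ for $\mathrm{SL}_2(\mathbb{Z})$) to the product of a weight-$(r+\tfrac12)$ form, the Jacobi theta function $\theta$, and a suitable kernel in $(\tau,z)$. In this way $c_{k,r}(z)$ is realized as the weight-$k$ holomorphic projection of an explicit non-holomorphic object built from $\mathcal{C}_r(\tau)\theta(\tau)$.

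Step 2: compute $\langle f,c_{k,r}\rangle$ by transferring the holomorphic projection to the other side of the Petersson pairing, which is legitimate since projection is adjoint to inclusion on $S_k$. The resulting integral pairs $f$ against the non-holomorphic kernel, and because $\mathcal{C}_r$ is an Eisenstein series, Rankin--Selberg unfolding applies: the integral over $\Gamma_0(4)\backslash\mathbb{H}$ collapses to a strip and becomes the Mellin transform of a Dirichlet series in the coefficients $a_f(n)$. Using Hecke multiplicativity and the identity $\sum_n a_f(n^2)n^{-s}=L_{2,f}(s)\zeta(2s-2k+2)^{-1}$, this Dirichlet series is identified with $L_{2,f}(r+k-1)$ times a zeta ratio that cancels the quadratic $L$-values $L(r,\chi_D)$ hidden inside $H(r,N)$ via \eqref{eq:defofHurwitzclassnumber}.

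The main obstacle is bookkeeping the explicit constant: each of $(r+k-2)!$, $(k-2)!$, $(k-r-1)!$, the powers $4^{r+k-2}$ and $\pi^{2r+k-1}$, and the overall minus sign must be tracked through (i) the Gegenbauer identity, (ii) the gamma factor from the Rankin--Selberg integral evaluated at $s=r+k-1$, and (iii) Cohen's closed form for $H(r,N)$ in \eqref{eq:defofHurwitzclassnumber}, whose leading $\tfrac{(-1)^{(r-1)/2}(r-1)!}{2^{r-1}\pi^r}$ is the source of the sign. A cleaner verification-style alternative, avoiding the unfolding entirely, is to test both sides on the Poincar\'e basis $\{P_m\}$: Petersson's formula gives $\langle P_m,c_{k,r}\rangle$ directly as the $m$-th Fourier coefficient of $c_{k,r}$ up to a universal constant, while the right-hand side is expanded via the Euler product of $L_{2,f}$, and matching the coefficient of $a_f(m)/m^{k-1}$ for all $m$ pins down the claimed constant.
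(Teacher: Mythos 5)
First, a point of comparison: the paper gives no proof of Lemma \ref{lem:integralperiods} at all --- it is imported verbatim as Zagier's Theorem 2 --- so there is no internal argument to measure you against; what you have written is an outline of the standard Rankin--Selberg proof that underlies Zagier's result. Your main route is the right one: $c_{k,r}$ does arise from Cohen's Eisenstein series $\mathcal{C}_r$ of weight $r+\tfrac12$ on $\Gamma_0(4)$ multiplied by $\theta$ (raised in weight and pushed down to level one), the Petersson pairing does unfold because $\mathcal{C}_r$ is Eisenstein, and the identity $\sum_n a_f(n^2)n^{-s}=L_{2,f}(s)\zeta(2s-2k+2)^{-1}$ you invoke is correct.

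As a proof, however, the proposal has genuine gaps, and they sit exactly where the content of the lemma lies. The statement is not merely ``$\langle f,c_{k,r}\rangle$ is proportional to $L_{2,f}(r+k-1)$'' but the precise constant $-\frac{(r+k-2)!(k-2)!}{(k-r-1)!}4^{-(r+k-2)}\pi^{-(2r+k-1)}$, and the two steps that produce it are asserted rather than carried out: you never verify that the Gegenbauer coefficients $P_{k,r}(t,m)$ are exactly what the holomorphic projection (equivalently, the Rankin--Cohen bracket with $\theta$ followed by $U_4$ and the descent from $\Gamma_0(4)$ to $\mathrm{SL}_2(\mathbb{Z})$) extracts from $\mathcal{C}_r\theta$, nor do you evaluate the gamma factors of the unfolded integral at $s=r+k-1$; these are precisely the sources of the factorials, the powers of $4$ and $\pi$, and the sign (and for $r=k-1$, where $c_{k,r}$ is not cuspidal, the projection step needs extra care). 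Moreover, the proposed ``cleaner verification'' via Poincar\'e series does not work as stated: $\langle P_m,c_{k,r}\rangle$ is indeed the $m$-th coefficient of $c_{k,r}$ up to a constant, but to compare with the right-hand side you must expand $P_m$ in the eigenbasis, which produces the spectral average $\sum_{g\in\mathcal{H}_k} L^{}_{2,g}(r+k-1)\,a_g(m)/\|g\|^2$ rather than an individual value; one cannot ``match the coefficient of $a_f(m)$'' for a single $f$, evaluating that average is itself a nontrivial identity essentially equivalent to the lemma (normally handled by the Petersson formula or by redoing the unfolding), and ``pinning down the constant'' presupposes an $f$-independent proportionality, which is exactly what has to be proved. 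So the plan is sound and faithful to the known argument, but the decisive computations --- the ones the lemma is actually about --- are missing, and the shortcut you offer in their place fails.
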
 
Now, we prove the equivalence between the linear independence among the periods and the linear independence among the corresponding modular forms. 
\begin{proposition} \label{prop:equivalence}
  Let $n\ge1$ be an integer and let $0\leq\ell_1<\ell_2<\cdots<\ell_n\leq\frac{k-4}{2}$ be integers.  The set of periods $\{\mathfrak{r}_{2\ell_i,\vee^2}\}_{i=1}^n$ on $S_k$ is linearly independent if and only if the set of modular forms $\{c_{k,k-1-2\ell_i}\}_{i=1}^n$ in $S_k$ is linearly independent.
\end{proposition}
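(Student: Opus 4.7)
The plan is to show that each period $\mathfrak{r}_{2\ell,\vee^2}$ and the Petersson functional $\phi_{c_{k,k-1-2\ell}}(\cdot) := \langle \cdot, c_{k,k-1-2\ell}\rangle$ differ by a nonzero scalar in $S_k^\ast$, and then invoke the conjugate-linear isomorphism $S_k \to S_k^\ast$ induced by the Petersson inner product.

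First, I would set $r = k-1-2\ell_i$ and apply Lemma \ref{lem:integralperiods} to obtain, for every Hecke eigenform $g \in \mathcal{H}_k$,
\[
\langle g, c_{k,k-1-2\ell_i}\rangle \;=\; -\,A_{k,\ell_i}\, L_{2,g}(2k-2-2\ell_i),
\]
where $A_{k,\ell_i}$ is the (nonzero) explicit constant appearing in \eqref{eq:integralperiods}. Next I would invoke the functional equation \eqref{eq:functionalequationforsquareL} to write
\[
L^{\ast}_{2,g}(2\ell_i+1) \;=\; L^{\ast}_{2,g}(2k-2-2\ell_i) \;=\; \gamma_2(2k-2-2\ell_i)\, L_{2,g}(2k-2-2\ell_i).
\]
Here I would check that, under the hypothesis $0\le \ell_i \le (k-4)/2$, both $\Gamma(2k-2-2\ell_i)$ and $\Gamma((k-2\ell_i)/2)$ are finite and positive (the second argument is at least $2$ in this range), so that $\gamma_2(2k-2-2\ell_i)$ is a nonzero finite number. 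This is precisely where the range constraint enters, and it is the only technical point of the argument.

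Combining the two displays gives a nonzero constant $B_{k,\ell_i}$ with
\[
\langle g, c_{k,k-1-2\ell_i}\rangle \;=\; B_{k,\ell_i}\, L^{\ast}_{2,g}(2\ell_i+1) \;=\; B_{k,\ell_i}\, \mathfrak{r}_{2\ell_i,\vee^2}(g)
\]
for every $g \in \mathcal{H}_k$, where the final equality uses that $g$ is an eigenform and the definition \eqref{eq:defofperiods}. Since both $\phi_{c_{k,k-1-2\ell_i}}$ and $\mathfrak{r}_{2\ell_i,\vee^2}$ are linear functionals on $S_k$ that agree on the basis $\mathcal{H}_k$ up to the common scalar $B_{k,\ell_i}$, I conclude $\phi_{c_{k,k-1-2\ell_i}} = B_{k,\ell_i}\,\mathfrak{r}_{2\ell_i,\vee^2}$ in $S_k^\ast$.

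Finally, the map $h \mapsto \langle \cdot, h\rangle$ is a conjugate-linear isomorphism $S_k \to S_k^\ast$, so $\{c_{k,k-1-2\ell_i}\}_{i=1}^n$ is linearly independent in $S_k$ if and only if $\{\phi_{c_{k,k-1-2\ell_i}}\}_{i=1}^n$ is linearly independent in $S_k^\ast$. Because each $B_{k,\ell_i}$ is a nonzero scalar, this in turn is equivalent to the linear independence of $\{\mathfrak{r}_{2\ell_i,\vee^2}\}_{i=1}^n$, completing the equivalence. The proof is essentially a bookkeeping argument tying together the integral representation and the functional equation; the only step that really requires care is ensuring $\gamma_2(2k-2-2\ell_i)$ does not vanish or blow up, which fails precisely at $\ell_i = (k-2)/2$ and motivates the restriction $\ell_i \le (k-4)/2$.
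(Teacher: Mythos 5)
Your argument is correct and is essentially the paper's own proof: both combine Lemma \ref{lem:integralperiods} (with $r=k-1-2\ell_i$, so $r+k-1=2k-2-2\ell_i$) with the functional equation \eqref{eq:functionalequationforsquareL} to write $\langle \cdot, c_{k,k-1-2\ell_i}\rangle = B_{k,\ell_i}\,\mathfrak{r}_{2\ell_i,\vee^2}$ in $S_k^{\ast}$ with $B_{k,\ell_i}\neq 0$, and then use the nondegeneracy of the Petersson product on the eigenform basis $\mathcal{H}_k$; your packaging through the conjugate-linear isomorphism $h\mapsto\langle\cdot,h\rangle$ merely compresses the two explicit directions spelled out in the paper. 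One small correction to your closing remark: at $\ell_i=\frac{k-2}{2}$ one has $\gamma_2(2k-2-2\ell_i)=\gamma_2(k)=2^{-k}\pi^{-3k/2}\Gamma(k)\Gamma(1)$, which is finite and nonzero, so nothing degenerates in the gamma factor there; the true reason for the restriction $\ell_i\le\frac{k-4}{2}$ is that it forces $r=k-1-2\ell_i\ge 3$, which is needed for the construction of $c_{k,r}$ and for the hypothesis of Lemma \ref{lem:integralperiods}, exactly as the paper indicates when excluding $\mathfrak{r}_{k-2,\vee^2}$.
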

\begin{proof}
Letting $r=k-1-2\ell_i$ in \eqref{eq:integralperiods}, together with \eqref{eq:functionalequationforsquareL}, we get the Petersson inner product
\begin{align}
    \langle f,c_{k,k-1-2\ell_i}\rangle&=A_{k,\ell_i}\cdot L^{\ast}_{2,f}(2\ell_i+1)\\&=A_{k,\ell_i}\cdot \mathfrak{r}_{2\ell_i,\vee^2}(f)
,\label{eq:integrationrepresentationofperiods}\end{align}
where $f\in S_k$ is a Hecke eigenform, and $A_{k,\ell_i}$ is some nonzero constant depending only on $k$ and $\ell_i$. Let $\mathcal{H}_k$ denote the set of normalized Hecke eigenforms in $S_k$. Note that
\begin{align}
\sum_{i=1}^na_ic_{k,k-1-2\ell_i}=0~{\rm in}~S_k\quad{\rm if~and~only~if}\quad\sum_{i=1}^n\overline{a_i}\langle f_j,c_{k,k-1-2\ell_i}\rangle=0~{\rm for~all}~f_j\in\mathcal{H}_k.\label{eq:suminnerproduct}
\end{align}
First, assume that $\{c_{k,k-1-2\ell_i}\}_{i=1}^n$ is linearly independent. We claim that $\{\mathfrak{r}_{2\ell_i,\vee^2}\}_{i=1}^n$
is linearly independent. Suppose that
$$\sum_{i=1}^n b_i\mathfrak{r}_{2\ell_{i},\vee^2}=0\in S_k^{\ast}.$$ 
It follows that 
\begin{align}
    \sum_{i=1}^nb_i \mathfrak{r}_{2\ell_i,\vee^2}(f_j)=\sum_{i=1}^nb_iA_{k,\ell_i}^{-1}\langle f_j,c_{k-1-2\ell_i}\rangle=0~{\rm for~all}~f_j\in\mathcal{H}_k,
\end{align}
which implies that
\begin{align}
    \sum_{i=1}^n\overline{b_iA_{k,\ell_i}^{-1}}c_{k,k-1-2\ell_i}=0\quad{\rm by}\quad \eqref{eq:suminnerproduct}.
\end{align}
Since $\{c_{k,k-1-2\ell_i}\}_{i=1}^n$ is linearly independent, we get $\overline{b_iA_{k,\ell_i}^{-1}}=0$, and $b_i=0$ for all $1\leq i\leq n$. 

Conversely, suppose that $\{\mathfrak{r}_{2\ell_i,\vee^2}\}_{i=1}^n$
is linearly independent. We show that $\{c_{k,k-1-2\ell_i}\}_{i=1}^n$ is linearly independent by a similar argument. Assume that
$$\sum_{i=1}^n a_ic_{k,k-1-2\ell_i}=0\in S_k.$$ 
It follows that
\begin{align}&\sum_{i=1}^n\overline{a_i}\langle f_j,c_{k,k-1-2\ell_i}\rangle=0~{\rm for~all}~f_j\in\mathcal{H}_k \quad {\rm if~and~only~if} \\ &\sum_{i=1}^n\overline{a_i}A_{k,\ell_i}\cdot \mathfrak{r}_{2\ell_i, \vee^2}(f_j)=0~{\rm for~all}~f_j\in\mathcal{H}_k.\end{align}
Since $\{\mathfrak{r}_{2\ell_i,\vee^2}\}_{i=1}^n$
is linearly independent, we get $\overline{a_i}A_{k,\ell_i}=0$, and $a_i=0$ for all $1\leq i\leq n$. 
\end{proof}

\section{Asymptotics of Fourier coefficients  }\label{sect:Prelim}
First, we prove some preparatory lemmas that will be used to estimate the Fourier coefficients of $c_{k,r}$. The key is that for a perfect square $m$, the asymptotics of the $m^{{\rm th}}$ Fourier coefficient of $c_{k,r}$ is clear. We would like to mention that this observation is inspired by Serre's work \cite[Proposition 3]{SerretraceofHecke}, which describes the asymptotics of the trace of the Hecke operator $T_n$ on $S_k$. 
\begin{lemma}\label{lem:boundP_k,r}
    For integers $m\geq1$ and $t$ with $t^2<4m$, we have
    \begin{align}
        |P_{k,r}(t,m)|\leq\binom{k-2}{r-1}\cdot\frac{2^rm^{(k-1)/2}}{(4m-t^2)^{r/2}}.
    \end{align}
\end{lemma}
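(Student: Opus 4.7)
The plan is to obtain the bound by explicitly expanding the generating series and using a Chebyshev-type estimate on each factor. The hypothesis $t^{2}<4m$ is what makes the quadratic $1-tx+mx^{2}$ have complex conjugate roots of equal modulus, which is the real content needed.

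First, I would parametrize by writing $t=2\sqrt{m}\cos\theta$ with $\theta\in(0,\pi)$, so that $\sin\theta=\sqrt{4m-t^{2}}/(2\sqrt{m})>0$. A standard computation with the recursion $U_{n+1}=tU_{n}-mU_{n-1}$, $U_{0}=1$, $U_{1}=t$ (or by factoring $1-tx+mx^{2}=(1-\alpha x)(1-\beta x)$ with $\alpha,\beta=\sqrt{m}\,e^{\pm i\theta}$ and doing partial fractions) gives the closed form
\begin{equation}
\frac{1}{1-tx+mx^{2}}=\sum_{n=0}^{\infty}U_{n}(t,m)\,x^{n},\qquad U_{n}(t,m)=m^{n/2}\,\frac{\sin((n+1)\theta)}{\sin\theta}.
\end{equation}
From $|\sin((n+1)\theta)|\le 1$ and the value of $\sin\theta$ above, this yields the pointwise bound
\begin{equation}
|U_{n}(t,m)|\le\frac{m^{n/2}}{\sin\theta}=\frac{2\,m^{(n+1)/2}}{\sqrt{4m-t^{2}}}.
\end{equation}

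Next, raising to the $r$th power and taking the $r$-fold Cauchy product,
\begin{equation}
\frac{1}{(1-tx+mx^{2})^{r}}=\sum_{N=0}^{\infty}\Bigl(\sum_{n_{1}+\cdots+n_{r}=N}\prod_{j=1}^{r}U_{n_{j}}(t,m)\Bigr)x^{N},
\end{equation}
so that $P_{k,r}(t,m)$ is obtained by setting $N=k-r-1$. Applying the triangle inequality and the bound on each $U_{n_j}$, every term in this inner sum is dominated by
\begin{equation}
\prod_{j=1}^{r}\frac{2\,m^{(n_{j}+1)/2}}{\sqrt{4m-t^{2}}}=\frac{2^{r}\,m^{(N+r)/2}}{(4m-t^{2})^{r/2}}=\frac{2^{r}\,m^{(k-1)/2}}{(4m-t^{2})^{r/2}},
\end{equation}
since $N+r=k-1$.

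Finally I would count the number of terms: compositions $(n_{1},\dots,n_{r})$ of $k-r-1$ into $r$ nonnegative parts are counted by $\binom{k-r-1+r-1}{r-1}=\binom{k-2}{r-1}$. Multiplying this count by the per-term bound gives exactly the claimed inequality. No step is genuinely difficult; the only delicate point is keeping the exponents of $m$ straight across the convolution, which is why I would emphasize that each $U_{n_{j}}$ contributes $m^{(n_{j}+1)/2}$ rather than $m^{n_{j}/2}$, and that the $r$ extra half-powers of $m$ combine with $N=k-r-1$ to produce the clean exponent $(k-1)/2$.
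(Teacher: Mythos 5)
Your argument is correct and is essentially the paper's proof: the closed form $U_n(t,m)=m^{n/2}\sin((n+1)\theta)/\sin\theta$ with $|\sin((n+1)\theta)|\le 1$ is just the Chebyshev-style rewriting of the paper's bound $\bigl|\frac{\rho^{n+1}-\overline{\rho}^{n+1}}{\rho-\overline{\rho}}\bigr|\le \frac{2m^{(n+1)/2}}{\sqrt{4m-t^2}}$ with $\rho=\sqrt{m}\,e^{i\theta}$, and the subsequent $r$-fold Cauchy product, per-term estimate, and composition count $\binom{k-2}{r-1}$ coincide with the paper's steps. No gaps.
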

\begin{proof}
   Following Zagier's appendix to \cite{LangbookandZagierappendix}, we recall that
   \begin{align}
       \frac{1}{1-tx+mx^2}=\sum_{n=0}^{\infty}\frac{\rho^{n+1}-\overline{\rho}^{n+1}}{\rho-\overline{\rho}}x^n,
   \end{align}
   where $\rho+\overline{\rho}=t$ and $\rho\cdot\overline{\rho}=m$. We write $a_j:=\frac{\rho^{j+1}-\overline{\rho}^{j+1}}{\rho-\overline{\rho}}$. Then
   \begin{align}
       |a_j|\leq\frac{2|\rho|^{j+1}}{|\rho-\overline{\rho}|}\leq\frac{2m^{(j+1)/2}}{(4m-t^2)^{1/2}}.
   \end{align}
   Since $P_{k,r}(t,m)$ is the coefficient of $x^{k-r-1}$ in $(1-tx+mx^2)^{-r}$, we have
   \begin{align}
       |P_{k,r}(t,m)|&=\left|\sum_{\substack{i_1,...,i_r\geq0\\i_1+\cdots+i_r=k-r-1}}a_{i_1}\cdots a_{i_r}\right|\\&\leq \sum_{\substack{i_1,...,i_r\geq0\\i_1+\cdots+i_r=k-r-1}}\prod_{1\leq j\leq r}\frac{2m^{(i_j+1)/2}}{(4m-t^2)^{1/2}}\\&=\binom{k-2}{r-1}\cdot\frac{2^rm^{(k-1)/2}}{(4m-t^2)^{r/2}}.
   \end{align}
   Here we use the fact that a non-negative integer $n$ can be written as sum of $d$ many non-negative integers in $\binom{n+d-1}{d-1}$ ways.
\end{proof}

\begin{lemma}\label{lem:P_k,rperfectsquare}
    If $m$ is a perfect square and $1\leq r\leq k-3$, then 
    \begin{align}
        P_{k,r}(2m^{1/2},m)=\binom{k+r-2}{2r-1}\cdot m^{(k-r-1)/2}.
    \end{align}
\end{lemma}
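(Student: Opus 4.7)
The plan is to exploit the fact that when $m$ is a perfect square, the quadratic inside the defining generating function factors as a perfect square, reducing everything to a single binomial series expansion.

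First, writing $s = m^{1/2}$, so $t = 2m^{1/2} = 2s$ and the discriminant $t^2 - 4m = 0$, one observes the factorization
\begin{align}
1 - tx + mx^2 = 1 - 2sx + s^2 x^2 = (1 - sx)^2.
\end{align}
Consequently, the generating function whose Taylor coefficients define $P_{k,r}$ collapses to
\begin{align}
\frac{1}{(1 - tx + mx^2)^r} = \frac{1}{(1 - sx)^{2r}}.
\end{align}

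Next, I would apply the generalized binomial theorem to expand $(1-sx)^{-2r}$, which gives the coefficient of $x^{k-r-1}$ as
\begin{align}
\binom{2r + (k-r-1) - 1}{k-r-1} s^{k-r-1} = \binom{k+r-2}{k-r-1} m^{(k-r-1)/2}.
\end{align}
The hypothesis $1 \leq r \leq k-3$ ensures that $k-r-1 \geq 2 > 0$ and $2r - 1 \geq 1$, so the binomial coefficient and the power of $m$ make sense (and the coefficient is genuinely nonzero). A final symmetry of binomial coefficients, $\binom{k+r-2}{k-r-1} = \binom{k+r-2}{(k+r-2) - (k-r-1)} = \binom{k+r-2}{2r-1}$, rewrites the result in the form stated in the lemma.

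There is no real obstacle here: the entire argument is driven by the observation that $t^2 = 4m$ forces the quadratic to be a perfect square. I would present it as a short three- or four-line computation following directly from the definition of $P_{k,r}(t,m)$ given in the paper.
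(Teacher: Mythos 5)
Your proposal is correct and follows essentially the same route as the paper: both rest on the factorization $1-2m^{1/2}x+mx^2=(1-m^{1/2}x)^2$, after which you extract the coefficient of $x^{k-r-1}$ via the negative binomial series while the paper does the equivalent computation by differentiating the geometric series $(2r-1)$ times. The final identity $\binom{k+r-2}{k-r-1}=\binom{k+r-2}{2r-1}$ is applied correctly, so the argument is complete.
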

\begin{proof}
Recall that $P_{k,r}(2m^{1/2},m)$ is the coefficient of $x^{k-r-1}$ in 
$$\frac{1}{(1-2m^{1/2}x+mx^2)^{r}}=\frac{1}{(1-m^{1/2}x)^{2r}}.$$ 
It is straightforward to see that
\begin{align}
    \frac{1}{(1-m^{1/2}x)^{2r}}&= \frac{m^{-(2r-1)/2}}{(2r-1)!}\frac{d^{2r-1}}{dx^{2r-1}}\left(\frac{1}{1-m^{1/2}x}\right)\\&=
    \frac{m^{-(2r-1)/2}}{(2r-1)!}\frac{d^{2r-1}}{dx^{2r-1}}\left(\sum_{n=0}^{\infty}(m^{1/2}x)^n\right)\\&=\frac{m^{-(2r-1)/2}}{(2r-1)!}\sum_{n=0}^{\infty}m^{(2r-1+n)/2}(2r-1+n)\cdots(n+1)x^n,
\end{align}
and letting $n=k-r-1$ gives the result.
\end{proof}
We also need the following estimates of the generalized Hurwitz class number $H(r,N)$.
\begin{lemma}\label{lem:boundsforclassnumber}
    Let $r\geq3$ be an odd integer and $N$ be a positive integer. Then
    \begin{align}
        |H(r,N)|\leq\frac{(r-1)!\zeta(2r-1)\zeta(r)}{2^{r-1}\pi^r}\cdot N^{2r-1/2}\log N.
    \end{align}
\end{lemma}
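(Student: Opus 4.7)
The plan is to bound $H(r,N)$ factor by factor using the closed formula \eqref{eq:defofHurwitzclassnumber}, together with the functional equation of $L(s,\chi_D)$ and elementary divisor-sum estimates. Writing $(-1)^rN = Df^2$ with $D$ a fundamental discriminant (so $|D|\ge 1$, $f\le \sqrt N$, and since $r$ is odd, $D<0$), I would start from the first form in \eqref{eq:defofHurwitzclassnumber},
\[
H(r,N) = L(1-r,\chi_D)\sum_{d\mid f}\mu(d)\chi_D(d)\,d^{r-1}\,\sigma_{2r-1}(f/d).
\]
The key algebraic observation is $|D|^{r-1/2}\,f^{2r-1} = (|D|f^2)^{r-1/2} = N^{r-1/2}$, which will let the $L$-value bound and the divisor-sum bound collapse into a single power of $N$ at the end.

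For the $L$-value, I would apply the functional equation for the odd real primitive character $\chi_D$, together with Legendre duplication $\Gamma(r/2)\Gamma((r+1)/2)=2^{1-r}\sqrt\pi\,(r-1)!$ and the reflection formula applied to $\Gamma((2-r)/2)$, to obtain the exact identity
\[
|L(1-r,\chi_D)| = \frac{(r-1)!\,|D|^{r-1/2}}{2^{r-1}\pi^r}\,|L(r,\chi_D)|,
\]
and then the trivial Dirichlet-series bound $|L(r,\chi_D)|\le \zeta(r)$ for $r\ge 3$ finishes this piece. For the divisor sum, I would drop signs via $|\mu(d)\chi_D(d)|\le 1$, use the uniform estimate $\sigma_{2r-1}(m)\le \zeta(2r-1)\,m^{2r-1}$ (which follows from $\sigma_{-(2r-1)}(m)\le \zeta(2r-1)$), and bound the residual $\sum_{d\mid f}d^{-r}\le \sum_{d\le f}d^{-1}\le 1+\log f$; this harmonic estimate is the source of the $\log N$ factor.

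Multiplying the two bounds and invoking $|D|^{r-1/2}f^{2r-1} = N^{r-1/2}$ together with $1+\log f\le \log N$ (for $N$ sufficiently large; the finitely many small $N$ are handled by the fact that $H(r,N)=0$ for $r$ odd and $N\in\{1,2\}$, since then $(-1)^rN\equiv 2,3\pmod 4$) yields the stated bound. I do not anticipate any substantive obstacle; the lemma is a deliberately crude estimate and the only nontrivial input is the functional equation at negative integers. A minor choice point is whether to bound $\sum_{d\mid f}d^{-r}$ by $\zeta(r)$ (giving a slightly sharper, log-free bound with constant $\zeta(r)^2\zeta(2r-1)$) or by the harmonic estimate above (matching the stated $\zeta(r)\zeta(2r-1)\log N$ shape exactly); either version is sufficient for the subsequent Fourier-coefficient asymptotics of $c_{k,r}$.
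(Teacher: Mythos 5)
Your proof is correct, but it reaches the bound by a genuinely different route than the paper. The paper starts from the second expression in \eqref{eq:defofHurwitzclassnumber} and bounds the $L$-value nontrivially: Abel summation plus P\'olya's inequality give $|L(r,\chi_D)|\le\sqrt{-D}\log(-D)$, while the twisted divisor sum is bounded by $f^{2r-1}\zeta(2r-1)\zeta(r)$ using $\sum_{d\mid f}d^{-r}\le\zeta(r)$; the $\log N$ and the generous exponent $2r-1/2$ then arise from crudely replacing $\sqrt{-D}\log(-D)$ and $\bigl(N/(-D)\bigr)^{r-1/2}$ by $\sqrt{N}\log N$ and $N^{r-1/2}$. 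You instead work from the first expression, rederive the archimedean constant from the functional equation (your identity $|L(1-r,\chi_D)|=\frac{(r-1)!\,|D|^{r-1/2}}{2^{r-1}\pi^{r}}|L(r,\chi_D)|$ for the odd primitive character $\chi_D$ is correct, by duplication and reflection, and is in fact the precise form of the second line of \eqref{eq:defofHurwitzclassnumber}), use the trivial bound $|L(r,\chi_D)|\le\zeta(r)$, and extract the logarithm from the harmonic estimate $\sum_{d\mid f}d^{-r}\le 1+\log f$. Combined with $|D|^{r-1/2}f^{2r-1}=N^{r-1/2}$, this yields $|H(r,N)|\le\frac{(r-1)!\zeta(r)\zeta(2r-1)}{2^{r-1}\pi^{r}}N^{r-1/2}(1+\log f)$, which is stronger than the stated lemma; the paper's route buys the convenience of quoting \eqref{eq:defofHurwitzclassnumber} verbatim with no Gamma-function computation, while yours buys a sharper exponent ($N^{r-1/2}$ instead of $N^{2r-1/2}$) and, with $\zeta(r)$ in place of the harmonic bound, even a log-free estimate. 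One small point to tighten: the passage from $1+\log f$ to $\log N$ is not quite a ``sufficiently large $N$'' matter settled by excluding $N\in\{1,2\}$, since $N=3,4,7,\dots$ are small with $H(r,N)\neq0$; the clean argument is that whenever $H(r,N)\neq0$ the discriminant $D$ is negative and fundamental, so $|D|\ge 3>e$ and hence $1+\log f\le\log(|D|f)\le\log(|D|f^{2})=\log N$, while for $N=1,2$ one indeed has $H(r,N)=0$ as you note.
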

\begin{proof}
We write $-N=Df^2$ with $D$ a fundamental discriminant. First, we give an upper bound of $L(r,\chi_D)$. By Abel's partial summation formula (for example, see \cite[Theorem 6.8]{ElementarymethodsNT}), we have 
\begin{align}
    \sum_{n\leq x}\chi_D(n)n^{-r}=x^{-r}\sum_{n\leq x}\chi_D(n)+r\int_{1}^x\left(\sum_{n\leq t}\chi_D(t)\right)\frac{dt}{t^{r+1}}.
\end{align}
Note that $\chi_D$ is a primitive Dirichlet character with conductor $-D$ since $D$ is a fundamental discriminant. By P\'olya's inequality \cite[Theorem 8.21]{ApostolNT},
\begin{align}
    \left|\sum_{n\leq x}\chi_D(n)\right|\leq\sqrt{-D}\log(-D).
\end{align}
Letting $x$ go to infinity, we have
\begin{align}
 |L(r,\chi_D)|&\leq\left(r\int_{1}^{\infty}\frac{dt}{t^{r+1}}\right)\cdot\sqrt{-D}\log(-D)\\&\leq\sqrt{-D}\log(-D).
\end{align}
On the other hand, we have 
\begin{align}
    \left|\sum_{d\mid f}\mu(d)\chi_D(d)d^{r-1}\sigma_{2r-1}(f/d)\right|&\leq\sum_{d\mid f}d^{r-1}\zeta(2r-1)(f/d)^{2r-1}\\&= f^{2r-1}\zeta(2r-1)\sum_{d\mid f}d^{-r}\\&\leq f^{2r-1}\zeta(2r-1)\zeta(r),
\end{align}
where we used $\sigma_{\alpha}(n)\leq\zeta(\alpha)n^{\alpha}$ for $\alpha>1$ in the first inequality, see \cite[p.245, Exercise 9]{ElementarymethodsNT}. It follows that
\begin{align}
    |H(r,N)|&=\left|\frac{(-1)^{(r-1)/2}(r-1)!}{2^{r-1}\pi^r}N^{r-1/2}L(r,\chi_D)\sum_{d\mid f}\mu(d)\chi_D(d)d^{r-1}\sigma_{2r-1}(f/d)\right|\\&\leq\frac{(r-1)!}{2^{r-1}\pi^r}N^{r-1/2}\cdot(\sqrt{-D}\log(-D))\cdot f^{2r-1}\zeta(2r-1)\zeta(r)\\&=\frac{(r-1)!\zeta(2r-1)\zeta(r)}{2^{r-1}\pi^r}N^{r-1/2}(\sqrt{-D}\log(-D))\cdot\left(\frac{N}{-D}\right)^{r-1/2}\\&\leq \frac{(r-1)!\zeta(2r-1)\zeta(r)}{2^{r-1}\pi^r}N^{r-1/2}(\sqrt{N}\log N)\cdot N^{r-1/2}\\&=\frac{(r-1)!\zeta(2r-1)\zeta(r)}{2^{r-1}\pi^r}\cdot N^{2r-1/2}\log N,
\end{align}
as desired.
\end{proof}
\begin{lemma}\label{lem:zetavaluesatoddnegative}
For odd integer $r\geq1$, we have
\begin{align}
    |H(r,0)|=|\zeta(1-2r)|=\frac{2(2r-1)!\zeta(2r)}{(2\pi)^{2r}}.
\end{align}
\end{lemma}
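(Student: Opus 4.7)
The plan is very short because the lemma is essentially a restatement of the classical functional equation for the Riemann zeta function evaluated at $s=2r$. The first equality $|H(r,0)|=|\zeta(1-2r)|$ is immediate from the definition of the generalized Hurwitz class number given in the introduction, which sets $H(r,0)=\zeta(1-2r)$ in the case $N=0$. So the only thing to prove is the identity
\[
|\zeta(1-2r)|=\frac{2(2r-1)!\,\zeta(2r)}{(2\pi)^{2r}}.
\]

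For this I would invoke the standard functional equation
\[
\zeta(1-s)=2(2\pi)^{-s}\cos\!\left(\tfrac{\pi s}{2}\right)\Gamma(s)\,\zeta(s),
\]
specialize to $s=2r$, and use $\Gamma(2r)=(2r-1)!$ together with $\cos(\pi r)=(-1)^r$ to obtain
\[
\zeta(1-2r)=2(2\pi)^{-2r}(-1)^r(2r-1)!\,\zeta(2r).
\]
Taking absolute values kills the sign, and since $\zeta(2r)>0$ for every $r\geq1$ we arrive at the claimed formula. One can equivalently cite the closed form $\zeta(1-2r)=-B_{2r}/(2r)$ in terms of Bernoulli numbers, combined with Euler's formula $\zeta(2r)=\frac{(-1)^{r+1}(2\pi)^{2r}B_{2r}}{2(2r)!}$, which yields the same identity after a routine simplification.

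There is no real obstacle here: the statement is a classical fact recorded for later use in the estimates of Section~\ref{sect:Prelim}, and the hypothesis that $r$ is odd plays no role (it is merely consistent with the odd-$r$ setting used elsewhere in the paper). I would therefore present the proof in a single short paragraph, invoking the functional equation and performing the specialization, with no further case analysis required.
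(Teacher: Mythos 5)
Your proof is correct and follows essentially the same route as the paper: the paper also just cites the functional equation $\zeta(1-s)=2(2\pi)^{-s}\Gamma(s)\cos\left(\frac{\pi s}{2}\right)\zeta(s)$ and sets $s=2r$. Your added remarks (handling $|H(r,0)|=|\zeta(1-2r)|$ from the definition, the Bernoulli-number alternative, and the observation that oddness of $r$ is not needed once absolute values are taken) are all accurate.
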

    \begin{proof}
     
Recall the functional equation of $\zeta(s)$ (for example, see \cite[Theorem 12.7]{ApostolNT}), 
\begin{align}
\zeta(1-s)=2(2\pi)^{-s}\Gamma(s)\cos\left(\frac{\pi s}{2}\right)\zeta(s). \label{eq:functionalzeta}
\end{align}
Letting $s=2r$ in \eqref{eq:functionalzeta} gives the result.
    \end{proof}
 To simplify the calculation, we normalize $c_{k,r}$ so that its first Fourier coefficient is $1$. For even integer $k$  and odd integer $r$ with $3\leq r\leq k-3$, we write
\begin{align}
    f_{k,r}:=\frac{c_{k,r}}{\sum_{t^2\leq4}P_{k,r}(t,1)H(r,4-t^2)}\label{eq:defoffr}
\end{align}
and denote by $a_{k,r}(m)$ the $m^{{\rm th}}$ Fourier coefficient of $f_{k,r}$. By the definition of $c_{k,r}$ \eqref{eq:defofc_k,r} and Lemma \ref{lem:P_k,rperfectsquare}, we have
\begin{align}
    a_{k,r}(m)&=\frac
    {P_{k,r}(2m^{1/2},m)H(r,0)+\sum_{t^2<4m}P_{k,r}(t,m)H(r,4m-t^2)}{P_{k,r}(2,1)H(r,0)+\sum_{t^2<4}P_{k,r}(t,1)H(r,4-t^2)}\\&=\frac
    {\binom{k+r-2}{2r-1}\cdot m^{(k-r-1)/2}H(r,0)+\sum_{t^2<4m}P_{k,r}(t,m)H(r,4m-t^2)}{\binom{k+r-2}{2r-1}H(r,0)+\sum_{t^2<4}P_{k,r}(t,1)H(r,4-t^2)}.\label{eq:akrmiddlestep}
\end{align}
We shall show in Proposition \ref{prop:estimateofFourierco} below that $a_{k,r}(m)$ is asymptotically like $m^{(k-r-1)/2}$ when $k$ becomes large. Thus, we define
\begin{align}
    \delta_{k,m,r}:=\frac{a_{k,r}(m)}{m^{(k-r-1)/2}}-1,
\end{align}
which is the error term of $a_{k,r}(m)$. The next proposition describes the asymptotics of $a_{k,r}(m)$. 
\begin{proposition}\label{prop:estimateofFourierco}
Let $m\geq1$ be a perfect square.
If $k\geq776$ then
\begin{align}
|\delta_{k,m,r}|\leq5\left(\frac{32\pi}{k-1}\right)^rm^{2r}\log4m.
\end{align} 
\end{proposition}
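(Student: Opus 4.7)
The strategy is to rewrite $\delta_{k,m,r}$ as a single quotient, bound its numerator via the preparatory lemmas, and show its denominator stays close to its leading term. Substituting the formula for $a_{k,r}(m)$ from \eqref{eq:akrmiddlestep} and isolating the $t=\pm 2\sqrt{m}$ contribution via Lemma \ref{lem:P_k,rperfectsquare}, one obtains
\begin{equation*}
\delta_{k,m,r}=\frac{E(m)/m^{(k-r-1)/2}-E(1)}{\binom{k+r-2}{2r-1}H(r,0)+E(1)},\qquad E(M):=\sum_{t^{2}<4M}P_{k,r}(t,M)\,H(r,4M-t^{2}).
\end{equation*}
When $m=1$ the numerator vanishes identically, so the claim is immediate; hereafter assume $m\geq 4$.

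Combining Lemmas \ref{lem:boundP_k,r} and \ref{lem:boundsforclassnumber}, each summand of $E(m)$ is controlled by
\begin{equation*}
|P_{k,r}(t,m)\,H(r,4m-t^{2})|\leq\binom{k-2}{r-1}\cdot\frac{2(r-1)!\,\zeta(r)\zeta(2r-1)}{\pi^{r}}\cdot m^{(k-1)/2}(4m-t^{2})^{(3r-1)/2}\log(4m-t^{2}).
\end{equation*}
Summing over the at most $4\sqrt{m}+1$ values of $t$ with $t^{2}<4m$ and using $(4m-t^{2})\leq 4m$ and $\log(4m-t^{2})\leq\log 4m$, the surviving $m$-powers amount to $m^{(k-1)/2}(4m)^{(3r-1)/2}/m^{(k-r-1)/2}=4^{(3r-1)/2}m^{2r-1/2}$, which combines with the counting factor $\sqrt{m}$ to produce the desired $m^{2r}\log 4m$ dependence for $|E(m)|/m^{(k-r-1)/2}$. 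The same inequality at $m=1$ bounds $|E(1)|$ by a constant times $\log 4$.

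To reach the promised shape, Lemma \ref{lem:zetavaluesatoddnegative} supplies $H(r,0)=2(2r-1)!\,\zeta(2r)/(2\pi)^{2r}$, and the binomial-ratio identity
\begin{equation*}
\frac{\binom{k-2}{r-1}(r-1)!}{\binom{k+r-2}{2r-1}(2r-1)!}=\frac{1}{(k-1)k\cdots(k+r-2)}\leq\frac{1}{(k-1)^{r}}
\end{equation*}
extracts the factor $1/(k-1)^{r}$. The numerical constants telescope as $2\cdot 4^{(3r-1)/2}\cdot(2\pi)^{2r}/\pi^{r}=(32\pi)^{r}$, while the surviving zeta-ratio $\zeta(r)\zeta(2r-1)/\zeta(2r)$ is uniformly bounded by a small absolute constant for odd $r\geq 3$ (each factor is close to $1$). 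The hypothesis $k\geq 776$ is used precisely to guarantee $|E(1)|\leq\tfrac{1}{2}\binom{k+r-2}{2r-1}H(r,0)$, which both keeps the denominator bounded below by half its leading value and allows the final coefficient to land at $5$.

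The main obstacle is not conceptual but careful numerical bookkeeping: all genuine analytic input already resides in Lemmas \ref{lem:boundP_k,r}, \ref{lem:P_k,rperfectsquare}, \ref{lem:boundsforclassnumber}, and \ref{lem:zetavaluesatoddnegative}. What remains is to track every factor of $2$, $\pi$, and $\zeta(\cdot)$ so that the overall coefficient is at most $5$ and the threshold $k\geq 776$ genuinely suffices to dominate $|E(1)|$ uniformly in odd $r\geq 3$.
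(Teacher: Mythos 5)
Your outline follows the paper's proof almost verbatim: the single-quotient form of $\delta_{k,m,r}$ is exactly the paper's $\frac{A_1-A_2}{1+A_2}$ (with $A_1=E(m)/\bigl(m^{(k-r-1)/2}\binom{k+r-2}{2r-1}H(r,0)\bigr)$ and $A_2=E(1)/\bigl(\binom{k+r-2}{2r-1}H(r,0)\bigr)$, cf. \eqref{eq:md}), and the ingredients you invoke — Lemmas \ref{lem:boundP_k,r}, \ref{lem:P_k,rperfectsquare}, \ref{lem:boundsforclassnumber}, \ref{lem:zetavaluesatoddnegative}, the binomial ratio $\frac{(k-2)!(2r-1)!}{(k+r-2)!}\le (k-1)^{-r}$, and the $O(\sqrt{m})$ count of $t$ — are precisely the paper's, yielding $|A_1|\le 2\zeta(3)\zeta(5)\left(\frac{32\pi}{k-1}\right)^r m^{2r}\log 4m$ and an analogous bound for $|A_2|$. (Your observation that $\delta_{k,1,r}=0$ is a harmless simplification the paper doesn't bother with.)

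The genuine flaw is in the final assembly. You assert that $k\ge 776$ is used "precisely" to get $|E(1)|\le\tfrac12\binom{k+r-2}{2r-1}|H(r,0)|$, i.e.\ $|A_2|\le\tfrac12$, and that this lets the constant land at $5$. It cannot: lower-bounding the denominator by half its leading term only gives $|\delta_{k,m,r}|\le 2(|A_1|+|A_2|)$, and since the numerator constant is already $\approx 4\zeta(3)\zeta(5)\approx 4.99$ (about $4.6$ with your variant bookkeeping, keeping $\zeta(2r)$ and using the count $4\sqrt m+1$), a factor-of-two loss puts you near $9$--$10$, not $5$. Because $4\zeta(3)\zeta(5)$ is within $0.3\%$ of $5$, there is essentially no slack: what is actually needed — and what the paper does in \eqref{eq:bdA2} by bounding the $t=0$ and $t=\pm1$ contributions to $E(1)$ separately — is that $|A_2|$ is \emph{tiny}, of size $O\bigl((32\pi/(k-1))^{3}\bigr)\approx 3\times 10^{-3}$ at $k=776$, so that $\frac{|A_1|+|A_2|}{1-|A_2|}\le 4.99995\left(\frac{32\pi}{k-1}\right)^r m^{2r}\log 4m<5\left(\frac{32\pi}{k-1}\right)^r m^{2r}\log 4m$. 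Your own per-term estimate applied at $m=1$ does give this smallness, so the argument is repairable, but not via the stated "one half" mechanism. Two smaller points: $H(r,0)=\zeta(1-2r)<0$ for odd $r$, so absolute values are needed throughout; and your telescoping $2\cdot 4^{(3r-1)/2}(2\pi)^{2r}/\pi^r=(32\pi)^r$ silently drops the factor $\tfrac12$ coming from $|H(r,0)|=\frac{2(2r-1)!\zeta(2r)}{(2\pi)^{2r}}$ and the numerical part of the $t$-count, which together supply the extra factor $2$ in the bound for $|A_1|$ — harmless for the shape, but it matters when the margin to $5$ is this thin.
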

\begin{proof}
By \eqref{eq:akrmiddlestep}, we have
\begin{align}
    \frac{a_{k,r}(m)}{m^{(k-r-1)/2}}
    =&\frac{\binom{k+r-2}{2r-1}H(r,0)+m^{-(k-r-1)/2}\sum_{t^2<4m}P_{k,r}(t,m)H(r,4m-t^2)}{\binom{k+r-2}{2r-1}H(r,0)+\sum_{t^2<4}P_{k,r}(t,1)H(r,4-t^2)}\\=&\frac{1+A_1}{1+A_2},
\end{align}
where $A_1$ and $A_2$ are given by
\begin{align}
    A_1:&=\frac{\sum_{t^2<4m}P_{k,r}(t,m)H(r,4m-t^2)}{\binom{k+r-2}{2r-1}H(r,0)m^{(k-r-1)/2}},\\A_2:&=\frac{\sum_{t^2<4}P_{k,r}(t,1)H(r,4-t^2)}{\binom{k+r-2}{2r-1}H(r,0)}.
\end{align}
It follows that when $|A_2|<1$, we have
\begin{align}|\delta_{k,m,r}|=\left|\frac{A_1-A_2}{1+A_2}\right|\leq\frac{|A_1|+|A_2|}{1-|A_2|}.\label{eq:md}\end{align}
We first estimate $A_1$. By Lemma \ref{lem:boundP_k,r} and \ref{lem:boundsforclassnumber}, we have
\begin{align}
|A_1|&\leq\frac{\sum_{t^2<4m}\binom{k-2}{r-1}\frac{2^rm^{(k-1)/2}}{(4m-t^2)^{r/2}}\cdot\frac{(r-1)!\zeta(2r-1)\zeta(r)}
   {2^{r-1}\pi^r}\cdot(4m-t^2)^{2r-1/2}\log(4m-t^2)}{\binom{k+r-2}{2r-1}|H(r,0)|m^{(k-r-1)/2}}   
\\&=\frac{(r-1)!\binom{k-2}{r-1}}{\binom{k+r-2}{2r-1}|H(r,0)|}\cdot\frac{2\zeta(2r-1)\zeta(r)m^{r/2}}{\pi^r}\sum_{t^2<4m}(4m-t^2)^{(3r-1)/2}\log(4m-t^2)\\&\leq\frac{(r-1)!\binom{k-2}{r-1}}{\binom{k+r-2}{2r-1}|H(r,0)|}\cdot\frac{2\zeta(5)\zeta(3)m^{r/2}}{\pi^r}\cdot2(4m)^{1/2}(4m)^{(3r-1)/2}\log4m\\&=\frac{(r-1)!\binom{k-2}{r-1}}{\binom{k+r-2}{2r-1}|H(r,0)|}\cdot\frac{4\zeta(5)\zeta(3)2^{3r}m^{2r}\log4m}{\pi^r}.
\end{align}
By Lemma \ref{lem:zetavaluesatoddnegative}, we have 
\begin{align}
    \frac{(r-1)!\binom{k-2}{r-1}}{\binom{k+r-2}{2r-1}|H(r,0)|}&=\frac{(k-2)!(2r-1)!}{(k+r-2)!}\cdot\frac{(2\pi)^{2r}}{
    2(2r-1)!\zeta(2r)}\\&=\frac{(k-2)(k-3)\cdots1}{(k+r-2)\cdots(k-1)(k-2)\cdots1}\cdot\frac{(2\pi)^{2r}}{2\zeta(2r)}\\&\leq\frac{1}{(k-1)^{r}}\cdot\frac{(2\pi)^{2r}}{2}.
\end{align}
It follows that 
\begin{align}
    |A_1|&\leq\frac{(2\pi)^{2r}}{2(k-1)^r}\cdot\frac{4\zeta(5)\zeta(3)2^{3r}m^{2r}\log4m}{\pi^r}\\&=2\zeta(5)\zeta(3)\left(\frac{32\pi}{k-1}\right)^rm^{2r}\log4m,\label{eq:bdA1}
\end{align}
Note that the above argument in particular implies that 
\begin{align}
    |A_{2}|\leq2\zeta(5)\zeta(3)\left(\frac{32\pi}{k-1}\right)^r\log4\leq2\zeta(5)\zeta(3)\left(\frac{32\pi}{k-1}\right)^rm^{2r}\log4m.\label{eq:roughboundforA2}
\end{align}
On the other hand, the upper bound of $|A_2|$ can be obtained by
\begin{align}
    |A_2|=&\left|\frac{P_{k,r}(0,1)+2P_{k,r}(1,1)H(r,3)}{\binom{k+r-2}{2r-1}H(r,0)}\right|\\\leq&\left|\frac{P_{k,r}(0,1)}{\binom{k+r-2}{2r-1}H(r,0)}\right|+\left|\frac{2P_{k,r}(1,1)H(r,3)}{\binom{k+r-2}{2r-1}H(r,0)}\right|\\\leq&\frac{\binom{k-2}{r-1}\frac{2^r}{4^{r/2}}\frac{(r-1)!\zeta(2r-1)\zeta(r)}{2^{r-1}\pi^r}4^{2r-1/2}\log4}{\binom{k+r-2}{2r-1}|H(r,0)|}+2\frac{\binom{k-2}{r-1}\frac{2^r}{3^{r/2}}\frac{(r-1)!\zeta(2r-1)\zeta(r)}{2^{r-1}\pi^r}3^{2r-1/2}\log3}{\binom{k+r-2}{2r-1}|H(r,0)|}\\\leq&\frac{(r-1)!\binom{k-2}{r-1}}{\binom{k+r-2}{2r-1}|H(r,0)|}\left(\frac{2^r\zeta(2r-1)\zeta(r)4^{2r-1/2}\log4}{4^{r/2}2^{r-1}\pi^r}+2\frac{2^r\zeta(2r-1)\zeta(r)3^{2r-1/2}\log3}{3^{r/2}2^{r-1}\pi^r}\right)\\\leq&\frac{1}{(k-1)^{r}}\cdot\frac{(2\pi)^{2r}}{2}\left(\frac{\zeta(2r-1)\zeta(r)(\log4)4^{3r/2}}{\pi^r}+\frac{4}{\sqrt{3}}\frac{\zeta(2r-1)\zeta(r)(\log3)3^{3r/2}}{\pi^r}\right)\\\leq&\frac{\zeta(5)\zeta(3)\log4}{2}\left(\frac{32\pi}{k-1}\right)^r+\frac{2\zeta(5)\zeta(3)\log3}{\sqrt{3}}\left(\frac{12\sqrt{3}\pi}{k-1}\right)^r\label{eq:bdA2}
\end{align}
Since $k\geq776$, we have
\begin{align}
|A_2|&\leq\frac{\zeta(5)\zeta(3)\log4}{2}\left(\frac{32\pi}{776-1}\right)^3+\frac{2\zeta(5)\zeta(3)\log3}{\sqrt{3}}\left(\frac{12\sqrt{3}\pi}{776-1}\right)^3<1.
\end{align}
Plugging the bounds \eqref{eq:bdA1}, \eqref{eq:roughboundforA2} and \eqref{eq:bdA2} into \eqref{eq:md} gives
\begin{align}
|\delta_{k,m,r}|&\leq2\cdot\frac{2\zeta(5)\zeta(3)\left(\frac{32\pi}{k-1}\right)^rm^{2r}\log4m}{1-\frac{\zeta(5)\zeta(3)\log4}{2}\left(\frac{32\pi}{k-1}\right)^r-\frac{2\zeta(5)\zeta(3)\log3}{\sqrt{3}}\left(\frac{12\sqrt{3}\pi}{k-1}\right)^r}\\&\leq \frac{4\zeta(5)\zeta(3)}{1-\frac{\zeta(5)\zeta(3)\log4}{2}\left(\frac{32\pi}{776-1}\right)^3-\frac{2\zeta(5)\zeta(3)\log3}{\sqrt{3}}\left(\frac{12\sqrt{3}\pi}{776-1}\right)^3}\cdot\left(\frac{32\pi}{k-1}\right)^rm^{2r}\log4m\\&\leq4.999942\left(\frac{32\pi}{k-1}\right)^rm^{2r}\log4m,
\end{align}
as desired.
\end{proof}
\section{Criterion for the linear independence}\label{sect:matrix}
Following the idea of \cite{oddperiods} and \cite{evenperiods}, we define matrices that are used to study the linear independence of $\{c_{k,k-1-2\ell_i}\}_{i=1}^n$, see \eqref{eq:defofc_k,r} and Proposition \ref{prop:equivalence}. Let $k\geq4$ be an even integer and $0\leq\ell_1<\ell_2<\cdots<\ell_n\leq\frac{k-4}{2}$ be integers. We pick out the Fourier coefficients $a_{k,k-1-2\ell_i}(1), a_{k,k-1-2\ell_i}(4), a_{k,k-1-2\ell_i}(4^2),...,a_{k,k-1-2\ell_i}(4^{n-1})$ of each $f_{k,k-1-2\ell_i}(1\leq i\leq n)$  \eqref{eq:defoffr} and place them in a $(n,n)$-matrix:
\begin{align}
    A_{k;\ell_1,...,\ell_n}:&=\left[a_{k,k-1-2\ell_i}(4^{j-1})\right]_{1\leq i,j\leq n}\\&=\left[4^{\ell_i(j-1)}(1+\delta_{k,4^{j-1},k-1-2\ell_i})\right]_{1\leq i,j\leq n} .\label{eq:defofmatrix}
\end{align}
To show that $\{c_{k,k-1-2\ell_i}\}_{i=1}^n$ is linearly independent, it suffices to show that $A_{k;\ell_1,...,\ell_n}$ is non-singular.
We write $$\varepsilon_{k,i,j}:=\delta_{k,4^{j-1},k-1-2\ell_i}.$$
Such a choice of $A_{k;\ell_1,...,\ell_n}$ allows us to relate it to the well-known Vondermonde matrix. More precisely, we ignore the error terms $\varepsilon_{k,i,j}$ of $A_{k;\ell_1,...,\ell_n}$  and define
\begin{align}
\mathscr{A}_{k;\ell_1,...,\ell_n}:=\begin{bmatrix}
        1 & 4^{\ell_1} & (4^{\ell_1})^2 & \cdots & (4^{\ell_1})^{n-1}\\
        1 & 4^{\ell_2} & (4^{\ell_2})^2 & \cdots & (4^{\ell_2})^{n-1}\\
        \vdots & \vdots & \vdots & \ddots & \vdots\\
        1 & 4^{\ell_{n-1}} & (4^{\ell_{n-1}})^2 & \cdots & (4^{\ell_{n-1}})^{n-1}\\ 
        1 & 4^{\ell_n} & (4^{\ell_n})^2 & \cdots & (4^{\ell_n})^{n-1}
    \end{bmatrix}.
\end{align}
Thus, similar to \cite[(2.8)]{oddperiods}, we have
\begin{align}
    \det\mathscr{A}_{k;\ell_1,...,\ell_n}&=\prod_{1\leq i<j\leq n}(4^{\ell_j}-4^{\ell_i})\\&=\prod_{1\leq i<j\leq n}4^{\ell_j}(1-4^{\ell_i-\ell_j})\\&\geq\prod_{1\leq i<j\leq n}\frac{3}{4} \cdot 4^{\ell_j}\\&=\left(\frac{3}{4}\right)^{n(n-1)/2}\prod_{j=1}^n4^{\ell_j(j-1)}.\label{eq:lowerboundofvandermondedet}
\end{align}
\begin{remark}
    In fact, if $m$ is a perfect square, then the asymptotics of $a_{k,k-1-2\ell_i}$ is clear (see Proposition \ref{prop:estimateofFourierco}), and the $(n,n)$-matrix  
    \begin{align}A_{k;\ell_1,...,\ell_n}:&=\left[a_{k,k-1-2\ell_i}(m^{j-1})\right]_{1\leq i,j\leq n}\\&=\left[m^{\ell_i(j-1)}(1+\delta_{k,m^{j-1},k-1-2\ell_i})\right]_{1\leq i,j\leq n}\end{align}
    is related to the Vandermonde matrix
\begin{align}
\mathscr{A}_{k;\ell_1,...,\ell_n}:=\begin{bmatrix}
        1 & m^{\ell_1} & (m^{\ell_1})^2 & \cdots & (m^{\ell_1})^{n-1}\\
        1 & m^{\ell_2} & (m^{\ell_2})^2 & \cdots & (m^{\ell_2})^{n-1}\\
        \vdots & \vdots & \vdots & \ddots & \vdots\\
        1 & m^{\ell_{n-1}} & (m^{\ell_{n-1}})^2 & \cdots & (m^{\ell_{n-1}})^{n-1}\\ 
        1 & m^{\ell_n} & (m^{\ell_n})^2 & \cdots & (m^{\ell_n})^{n-1}
    \end{bmatrix},\end{align}
which has determinant
\begin{align}
    \det\mathscr{A}_{k;\ell_1,...,\ell_n}&=\prod_{1\leq i<j\leq n}(m^{\ell_j}-m^{\ell_i})\\&=\prod_{1\leq i<j\leq n}m^{\ell_j}(1-m^{\ell_i-\ell_j})\\&\geq\prod_{1\leq i<j\leq n}\frac{m-1}{m} \cdot m^{\ell_j}\\&=\left(\frac{m-1}{m}\right)^{n(n-1)/2}\prod_{j=1}^nm^{\ell_j(j-1)}.
\end{align}
Any such choice of matrix $A_{k;\ell_1,...,\ell_n}$ will suffice to study the linear independence of $\{c_{k,k-1-2\ell_i}\}_{i=1}^{n}$. However, the lower bounds of $k$ in Propositions \ref{prop:nonsingularityasymptotic} and \ref{prop:nonsingularfarfromcenter} will not be significantly influenced by the choice of $m$, see Proposition \ref{prop:criterionofnonsingularity}. So we choose $m=4$ to ease the notation. 
\end{remark}The rest of this section is devoted to giving a criterion for the nonsingularity of $A_{k;\ell_1,...,\ell_n}$.
We have the following observation, see also \cite[Lemma 2.4]{oddperiods} and \cite[Corollary 2.5]{oddperiods}.
\begin{lemma}\label{lem:maxofsimplefactorofdet}Let $S_n$ denote the permutation group on the set $\{1,...,n\}$ and $\tau\in S_n$. Then 
\begin{align}
    \prod_{1\leq i\leq n}4^{\ell_i(\tau(i)-1)}
\end{align}
attains maximum when $\tau$ is the identity permutation in $S_n$.    
\end{lemma}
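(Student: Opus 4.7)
The plan is to reduce the assertion to the classical rearrangement inequality. Taking logarithms base $4$, the quantity $\prod_{1\le i\le n}4^{\ell_i(\tau(i)-1)}$ is maximized precisely when
\begin{align}
\sum_{i=1}^n \ell_i\bigl(\tau(i)-1\bigr) \;=\; \sum_{i=1}^n \ell_i\tau(i)\;-\;\sum_{i=1}^n \ell_i
\end{align}
is maximized. Since the second sum does not depend on $\tau$, the problem reduces to showing that $\sum_{i=1}^n \ell_i\tau(i)$ is maximized over $\tau\in S_n$ by the identity permutation.

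For this, I would give a direct swap argument (which is the proof of the rearrangement inequality in this setting). Suppose $\tau$ is not the identity. Then there exist indices $1\le i<j\le n$ with $\tau(i)>\tau(j)$. Let $\tau'\in S_n$ be obtained from $\tau$ by composing with the transposition that swaps the values $\tau(i)$ and $\tau(j)$, so that $\tau'(i)=\tau(j)$, $\tau'(j)=\tau(i)$, and $\tau'$ agrees with $\tau$ elsewhere. A direct computation gives
\begin{align}
\sum_{t=1}^n \ell_t\tau'(t) \;-\; \sum_{t=1}^n \ell_t\tau(t) \;=\; \bigl(\ell_j-\ell_i\bigr)\bigl(\tau(i)-\tau(j)\bigr)\;>\;0,
\end{align}
where strict positivity uses the hypothesis $\ell_1<\ell_2<\cdots<\ell_n$, in particular $\ell_i<\ell_j$. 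Hence every non-identity permutation can be modified by a swap that strictly increases $\sum \ell_t\tau(t)$, so the identity is the unique maximizer.

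There is no real obstacle here; the only small point to note is that the strict inequalities $\ell_1<\ell_2<\cdots<\ell_n$ among the exponents $\ell_i$ are exactly what is needed both to guarantee $\ell_j-\ell_i>0$ in the swap computation and to make the maximizer unique (a fact that will be convenient when this lemma is applied to bound the Leibniz-expansion terms of $\det A_{k;\ell_1,\ldots,\ell_n}$ against the diagonal term coming from $\det \mathscr{A}_{k;\ell_1,\ldots,\ell_n}$ in \eqref{eq:lowerboundofvandermondedet}).
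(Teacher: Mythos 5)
Your proof is correct and follows essentially the same route as the paper: the paper also argues by a pairwise exchange, observing that swapping two assigned values changes the exponent by $(\ell_2-\ell_1)(n_2-n_1)>0$, which is exactly your rearrangement-inequality swap computation after taking logarithms. No gap; your write-up just makes the reduction to maximizing $\sum_i \ell_i\tau(i)$ and the uniqueness of the maximizer slightly more explicit than the paper does.
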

\begin{proof}
  It suffices to show that for integers $\ell_2>\ell_1\geq0$ and $n_2>n_1\geq1$ we have 
  \begin{align}
      4^{\ell_2(n_2-1)}\cdot 4^{\ell_1(n_1-1)}>4^{\ell_2(n_1-1)}\cdot 4^{\ell_1(n_2-1)}
  ,\end{align}
  which is clear from
  \begin{align}
      \ell_2(n_2-1)+\ell_1(n_1-1)-\ell_2(n_1-1)-\ell_1(n_2-1)=(\ell_2-\ell_1)(n_2-n_1)>0.
  \end{align}
  This finishes the proof.
\end{proof}
Combining the inequality \eqref{eq:lowerboundofvandermondedet} and Lemma \ref{lem:maxofsimplefactorofdet}, we have the following.
\begin{corollary}
    For any $\tau\in S_{n}$, we have 
    \begin{align}
        \prod_{i=1}^{n}4^{\ell_i(\tau(i)-1)}\leq\left(\frac{4}{3}\right)^{n(n-1)/2}\cdot\det\mathscr{A}_n
    .\label{eq:boundofsimplefactor}\end{align}
\end{corollary}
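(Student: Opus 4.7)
The plan is to chain together the two facts already established just above the corollary. First, Lemma \ref{lem:maxofsimplefactorofdet} states that among all permutations $\tau \in S_n$, the value of $\prod_{i=1}^n 4^{\ell_i(\tau(i)-1)}$ is maximized at the identity, so for any $\tau$ one has
$$\prod_{i=1}^n 4^{\ell_i(\tau(i)-1)} \leq \prod_{j=1}^n 4^{\ell_j(j-1)}.$$
Second, the Vandermonde estimate \eqref{eq:lowerboundofvandermondedet} gives
$$\det\mathscr{A}_{k;\ell_1,\ldots,\ell_n} \geq \left(\frac{3}{4}\right)^{n(n-1)/2} \prod_{j=1}^n 4^{\ell_j(j-1)},$$
which on rearrangement reads
$$\prod_{j=1}^n 4^{\ell_j(j-1)} \leq \left(\frac{4}{3}\right)^{n(n-1)/2} \det\mathscr{A}_{k;\ell_1,\ldots,\ell_n}.$$

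Concatenating these two inequalities yields exactly \eqref{eq:boundofsimplefactor}. There is no real obstacle here: the corollary is a direct book-keeping step that packages Lemma \ref{lem:maxofsimplefactorofdet} together with the Vandermonde lower bound into a single form convenient for the later nonsingularity arguments, where each diagonal-type product appearing in the Leibniz expansion of $\det A_{k;\ell_1,\ldots,\ell_n}$ will need to be compared against $\det\mathscr{A}_{k;\ell_1,\ldots,\ell_n}$.
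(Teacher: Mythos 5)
Your proposal is correct and matches the paper exactly: the corollary is stated there as an immediate combination of Lemma \ref{lem:maxofsimplefactorofdet} (identity permutation maximizes the product) with the Vandermonde lower bound \eqref{eq:lowerboundofvandermondedet}, rearranged just as you do.
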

Now, we give the criterion for the nonsingularity of $A_{k;\ell_1...,\ell_n}$.
\begin{proposition}\label{prop:criterionofnonsingularity}\cite[Lemma 2.7]{oddperiods}
    Let $A_{k;\ell_1,...,\ell_n}$ be as in \eqref{eq:defofmatrix}. Define a function 
    \begin{align}f(x_1,...,x_n):=\left(\prod_{1\leq i\leq n}(1+x_{i})\right)-1.\label{eq:defoff}\end{align}
    Suppose that
    \begin{align}
        \sup_{\tau\in S_n}|f(\varepsilon_{k,1,\tau(1)},...,\varepsilon_{k,n,\tau(n)})|<\left(\frac{3}{4}\right)^{n(n-1)/2}\cdot(n!)^{-1}.\label{eq:conditionfordetneq0}
    \end{align}
    Then $A_{k;\ell_1,...,\ell_n}$ is nonsingular.
\end{proposition}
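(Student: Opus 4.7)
The plan is to expand the determinant of $A_{k;\ell_1,\dots,\ell_n}$ via the Leibniz formula and isolate the Vandermonde determinant $\det\mathscr{A}_{k;\ell_1,\dots,\ell_n}$ as the main term, showing that the remainder is dominated in absolute value under hypothesis~\eqref{eq:conditionfordetneq0}. Concretely, using \eqref{eq:defofmatrix}, the $(i,j)$-entry factors as $4^{\ell_i(j-1)}(1+\varepsilon_{k,i,j})$, so
\begin{align}
\det A_{k;\ell_1,\dots,\ell_n}
=\sum_{\tau\in S_n}\sgn(\tau)\prod_{i=1}^n 4^{\ell_i(\tau(i)-1)}(1+\varepsilon_{k,i,\tau(i)}).
\end{align}
Writing $\prod_i(1+\varepsilon_{k,i,\tau(i)})=1+f(\varepsilon_{k,1,\tau(1)},\dots,\varepsilon_{k,n,\tau(n)})$ using \eqref{eq:defoff}, the ``$1$'' part assembles into $\det\mathscr{A}_{k;\ell_1,\dots,\ell_n}$, and the remaining ``$f$'' part is the error I need to control.

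Next, I would bound the error
\begin{align}
E:=\sum_{\tau\in S_n}\sgn(\tau)\Bigl(\prod_{i=1}^n 4^{\ell_i(\tau(i)-1)}\Bigr)f(\varepsilon_{k,1,\tau(1)},\dots,\varepsilon_{k,n,\tau(n)})
\end{align}
by the triangle inequality. For each $\tau$, apply the corollary (inequality \eqref{eq:boundofsimplefactor}) to replace the monomial $\prod_i 4^{\ell_i(\tau(i)-1)}$ by $(4/3)^{n(n-1)/2}\det\mathscr{A}_{k;\ell_1,\dots,\ell_n}$, and bound $|f(\dots)|$ by $\sup_{\tau\in S_n}|f(\varepsilon_{k,1,\tau(1)},\dots,\varepsilon_{k,n,\tau(n)})|$. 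Summing over the $n!$ permutations gives
\begin{align}
|E|\;\le\; n!\cdot\Bigl(\frac{4}{3}\Bigr)^{n(n-1)/2}\cdot\det\mathscr{A}_{k;\ell_1,\dots,\ell_n}\cdot\sup_{\tau\in S_n}\bigl|f(\varepsilon_{k,1,\tau(1)},\dots,\varepsilon_{k,n,\tau(n)})\bigr|.
\end{align}

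Finally, the hypothesis \eqref{eq:conditionfordetneq0} is precisely the statement $n!\cdot(4/3)^{n(n-1)/2}\cdot\sup_\tau|f|<1$, so $|E|<\det\mathscr{A}_{k;\ell_1,\dots,\ell_n}$. Since $\det A_{k;\ell_1,\dots,\ell_n}=\det\mathscr{A}_{k;\ell_1,\dots,\ell_n}+E$ and $\det\mathscr{A}_{k;\ell_1,\dots,\ell_n}>0$ by \eqref{eq:lowerboundofvandermondedet}, it follows that $\det A_{k;\ell_1,\dots,\ell_n}\neq 0$, completing the proof. There is no real obstacle here: the only subtlety is bookkeeping in the Leibniz expansion to make the Vandermonde determinant appear cleanly as the principal term, with the rest packaged into the single function $f$ so that the corollary applies uniformly to every permutation.
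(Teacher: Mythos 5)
Your proposal is correct and follows essentially the same route as the paper: both expand $\det A_{k;\ell_1,\dots,\ell_n}$ by the Leibniz formula, split off $\det\mathscr{A}_{k;\ell_1,\dots,\ell_n}$ as the main term, and bound the remaining $f$-part for each permutation via \eqref{eq:boundofsimplefactor}, so that hypothesis \eqref{eq:conditionfordetneq0} forces the error to be smaller than the (positive) Vandermonde determinant. No gaps.
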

\begin{proof}
Note that 
 \begin{align}
     \det A_{k;\ell_1,...,\ell_n}&=\sum_{\tau\in S_n}(-1)^{\sign(\tau)}\prod_{i=1}^{n}4^{\ell_i(\tau(i)-1)}(1+\varepsilon_{k;i,\tau(i)})\\&=\det\mathscr{A}_{k;\ell_1,...,\ell_n}+\sum_{\tau\in S_n}(-1)^{\sign(\tau)}f(\varepsilon_{k,1,\tau(1)},...,\varepsilon_{k,n,\tau(n)})\prod_{i=1}^n4^{\ell_i(\tau(i)-1)}\\&\geq\det\mathscr{A}_{k;\ell_1,...,\ell_n}\left(1-n!\cdot\left(\frac{4}{3}\right)^{n(n-1)/2}\cdot  \sup_{\tau\in S_n}|f(\varepsilon_{k,1,\tau(1)},...,\varepsilon_{k,n,\tau(n)})|\right),
 \end{align}
 where the inequality comes from \eqref{eq:boundofsimplefactor}. Thus, $\eqref{eq:conditionfordetneq0}$ implies that $\det A_{k;\ell_1,...,\ell_n}>0$.
 \end{proof}

\section{Proof of results}\label{sect:proofs}
In this section, we prove Theorems \ref{thm:asymtotic}, \ref{thm:twolinearindependent} and \ref{thm:firstlogk}. It suffices to show the nonsingularity of $A_{k;\ell_1,...,\ell_n}$ in each case. The key is to manipulate $\sup_{\tau\in S_n}|f(\varepsilon_{k,1,\tau(1)},...,\varepsilon_{k,n,\tau(n)})|$ so that the condition \eqref{eq:conditionfordetneq0} of Proposition \ref{prop:criterionofnonsingularity} is satisfied. First, we prove an elementary inequality.
\begin{lemma}\label{lem:inequalityforf}
    Let $f$ be defined as in \eqref{eq:defoff}. Suppose there is an $M>0$  such that $|x_i|\leq M$ for all $1\leq i\leq n$. Then 
    \begin{align}
        |f(x_1,...,x_n)|\leq(1+M)^{n}-1.
    \end{align}
\end{lemma}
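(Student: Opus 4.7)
The plan is to exploit the multilinearity of the product $\prod_{i=1}^n(1+x_i)$ and reduce everything to a subset expansion plus the triangle inequality. First I would expand
\begin{align}
\prod_{i=1}^n(1+x_i) = \sum_{S\subseteq\{1,\ldots,n\}}\prod_{i\in S}x_i,
\end{align}
with the convention that the empty product equals $1$. Subtracting $1$ removes precisely the contribution of $S=\emptyset$, so
\begin{align}
f(x_1,\ldots,x_n) = \sum_{\emptyset\neq S\subseteq\{1,\ldots,n\}}\prod_{i\in S}x_i.
\end{align}

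From here the bound is immediate: the triangle inequality and the hypothesis $|x_i|\le M$ give
\begin{align}
|f(x_1,\ldots,x_n)| \le \sum_{\emptyset\neq S}\prod_{i\in S}|x_i| \le \sum_{\emptyset\neq S}M^{|S|} = \sum_{k=1}^{n}\binom{n}{k}M^{k},
\end{align}
and the binomial theorem identifies the final sum as $(1+M)^n-1$. There is no substantive obstacle; the whole argument is a routine multilinear expansion combined with a term-by-term estimate, and can be regarded as the natural several-variable generalization of the one-variable inequality $|(1+x)^n-1|\le (1+|x|)^n-1$. If one prefers to avoid the subset expansion, an equally short induction on $n$ works: writing
\begin{align}
\prod_{i=1}^n(1+x_i)-1 = (1+x_n)\Bigl(\prod_{i=1}^{n-1}(1+x_i)-1\Bigr)+x_n,
\end{align}
the inductive hypothesis bounds the first summand by $(1+M)\bigl((1+M)^{n-1}-1\bigr)$ and the second by $M$, whose sum telescopes to $(1+M)^n-1$.
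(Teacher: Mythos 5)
Your argument is correct and is essentially the same as the paper's: both expand $\prod_{i=1}^n(1+x_i)-1$ into the sum of all nonempty products of the $x_i$, apply the triangle inequality to bound the degree-$j$ terms by $\binom{n}{j}M^j$, and sum via the binomial theorem. The extra induction you sketch is a fine alternative but adds nothing needed beyond the paper's proof.
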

\begin{proof}
    Note that 
    \begin{align}
|f(x_1,...,x_n)|&=\left|\prod_{1\leq i\leq n}(1+x_i)-1\right|\\&=
\left|\sum_{i=1}^nx_{i}+\sum_{1\leq i<j\leq n}x_ix_j+\cdots+x_1x_2\cdots x_n\right|\\&\leq\binom{n}{1}M+\binom{n}{2}M^2+\cdots+\binom{n}{n}M^n\\&=(1+M)^n-1,\label{eq:boundoff}\end{align}
as desired.
\end{proof}
Theorem \ref{thm:asymtotic} follows immediately from the result below.
\begin{proposition}\label{prop:nonsingularityasymptotic}
Let $n\geq1$ be an integer and $k$ be an even integer with \begin{align}k>\max\left(776,\frac{32\pi\sqrt[3]{20}\cdot4^{13(n-1)/6}}{\sqrt[3]{\sqrt[n]{\left(\frac{3}{4}\right)^{(n-1)n/2}(n!)^{-1}+1}-1}}+1\right)\label{eq:lowerboundforgeneralcase}\end{align}
If $0\leq\ell_1<\ell_2<\cdots<\ell_n\leq\frac{k-4}{2}$, then $A_{k;\ell_1,...,\ell_n}$ is nonsingular.
\end{proposition}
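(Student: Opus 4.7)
The plan is to verify the hypothesis of Proposition~\ref{prop:criterionofnonsingularity}, namely that $\sup_{\tau\in S_n}|f(\varepsilon_{k,1,\tau(1)},\ldots,\varepsilon_{k,n,\tau(n)})| < (3/4)^{n(n-1)/2}(n!)^{-1}$. Setting $M:=\max_{1\leq i,j\leq n}|\varepsilon_{k,i,j}|$, Lemma~\ref{lem:inequalityforf} gives the uniform bound $|f(\varepsilon_{k,1,\tau(1)},\ldots,\varepsilon_{k,n,\tau(n)})|\leq (1+M)^n-1$ for every $\tau\in S_n$, so it suffices to prove
\[
M<\sqrt[n]{1+(3/4)^{n(n-1)/2}(n!)^{-1}}-1.
\]

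To estimate $M$, I would invoke Proposition~\ref{prop:estimateofFourierco} with $m=4^{j-1}$ and $r=k-1-2\ell_i$ (the hypothesis $k\geq 776$ being granted by \eqref{eq:lowerboundforgeneralcase}). Since $\ell_1<\cdots<\ell_n\leq(k-4)/2$ forces $r_i:=k-1-2\ell_i\geq 3$, this yields
\[
|\varepsilon_{k,i,j}|=|\delta_{k,4^{j-1},k-1-2\ell_i}|\leq 5\left(\frac{32\pi\cdot 4^{2(j-1)}}{k-1}\right)^{r_i}j\log 4.
\]
The bound \eqref{eq:lowerboundforgeneralcase} is strong enough to force $32\pi\cdot 4^{2(n-1)}/(k-1)<1$, so the base of the power above is strictly less than $1$ for every admissible $j\leq n$. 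Consequently the estimate is maximized by taking the smallest exponent $r_i=3$ and the largest column index $j=n$, giving
\[
M\leq 5n\log 4\cdot\left(\frac{32\pi\cdot 4^{2(n-1)}}{k-1}\right)^{3}.
\]

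For the final algebraic packaging, I would use $5\log 4<20$ together with the elementary inequality $\sqrt[3]{n}\leq 4^{(n-1)/6}$ (equivalent to $n\leq 2^{n-1}$, which holds for all $n\geq 1$) to deduce
\[
\sqrt[3]{M}\leq \sqrt[3]{20n}\cdot\frac{32\pi\cdot 4^{2(n-1)}}{k-1}\leq \frac{32\pi\sqrt[3]{20}\cdot 4^{13(n-1)/6}}{k-1}.
\]
Passing the desired inequality $M<\sqrt[n]{1+(3/4)^{n(n-1)/2}(n!)^{-1}}-1$ through $\sqrt[3]{\cdot}$ and rearranging then recovers exactly the second entry of the $\max$ in the hypothesis \eqref{eq:lowerboundforgeneralcase}, so the argument closes.

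The main obstacle is the interaction between the exponent $r_i$ and the base $32\pi\cdot 4^{2(j-1)}/(k-1)$: shrinking $r_i$ enlarges the factor when the base is below $1$, while enlarging $j$ inflates the base, so one must verify that the hypothesis on $k$ keeps the base below $1$ at $j=n$ in order for both extrema to align at $(r_i,j)=(3,n)$. Once this is settled, the slightly non-obvious estimate $\sqrt[3]{n}\leq 4^{(n-1)/6}$ is what converts the linear factor of $n$ from $5n\log 4$ into the supplementary $4^{(n-1)/6}$, producing the clean exponent $4^{13(n-1)/6}=4^{2(n-1)+(n-1)/6}$ that appears in the statement.
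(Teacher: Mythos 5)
Your argument is correct and follows essentially the same route as the paper: bound each $\varepsilon_{k,i,j}$ via Proposition \ref{prop:estimateofFourierco}, exploit that the base is below $1$ so the worst case is exponent $r=3$ and column $j=n$, then feed the resulting uniform bound $M$ into Lemma \ref{lem:inequalityforf} and Proposition \ref{prop:criterionofnonsingularity}, with the hypothesis \eqref{eq:lowerboundforgeneralcase} calibrated to give exactly $M<\sqrt[n]{(3/4)^{n(n-1)/2}(n!)^{-1}+1}-1$. The only (harmless) divergence is bookkeeping of the logarithmic factor: the paper absorbs $\log 4m\le 2(4m)^{1/2}\le 4m^{r/6}$ into the exponent to reach $4^{13(n-1)/6}$, whereas you keep $\log 4m=j\log 4$ exact and absorb $\sqrt[3]{n}\le 4^{(n-1)/6}$ at the end, arriving at the same threshold.
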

\begin{proof}
   Let $m=4^{j-1}$ and $r=k-1-2\ell_i,$ where $ 1\leq i,j\leq n$. The assumption \eqref{eq:lowerboundforgeneralcase} on $k$ allows us to use Proposition \ref{prop:estimateofFourierco}, which implies that
   \begin{align}
|\delta_{k,m,r}|&\leq5\left(\frac{32\pi}{k-1}\right)^rm^{2r}\log4m\\&\leq
 5\left(\frac{32\pi}{k-1}\right)^rm^{2r}2(4m)^{1/2}\\&\leq20\left(\frac{32\pi}{k-1}\right)^rm^{2r+r/6}\\&=20\left(\frac{32\pi\cdot4^{13(j-1)/6}}{k-1}\right)^r\\&\leq20\left(\frac{32\pi\cdot4^{13(n-1)/6}}{k-1}\right)^r.\end{align}
 Note that \eqref{eq:lowerboundforgeneralcase} in particular implies that
 \begin{align}
  \left(\frac{32\pi\cdot4^{13(n-1)/6}}{k-1}\right)^r&\leq\left(\frac{32\pi\cdot4^{13(n-1)/6}\sqrt[3]{\sqrt{2}-1}}{32\pi\sqrt[3]{20}\cdot4^{13(n-1)/6}}\right)^r<1.
 \end{align}
Then by Lemma \ref{lem:inequalityforf}
   \begin{align}&\sup_{\tau\in S_n}|f(\varepsilon_{k,1,\tau(1)},...,\varepsilon_{k,n,\tau(n)})|\\&<\left(1+\left(\frac{32\pi\sqrt[3]{20}\cdot4^{13(n-1)/6}\cdot\sqrt[3]{\sqrt[n]{\left(\frac{3}{4}\right)^{(n-1)n/2}(n!)^{-1}+1}-1}}{32\pi\sqrt[3]{20}\cdot4^{13(n-1)/6}}\right)^3\right)^n-1\\&=\left(\frac{3}{4}\right)^{n(n-1)/2}\cdot(n!)^{-1}.\end{align}
   Hence $A_{k;\ell_1,...,\ell_n}$ is nonsingular by Proposition \ref{prop:criterionofnonsingularity}.
\end{proof}

We next present the proof of Theorem \ref{thm:twolinearindependent}.
\begin{proof}[Proof of Theorem \ref{thm:twolinearindependent}]Furthermore, note that Theorem \ref{thm:asymtotic} or Proposition \ref{prop:nonsingularityasymptotic} implies that for even integer $k$ with
\begin{align}
    k&>\frac{32\pi\sqrt[3]{20}\cdot4^{13(2-1)/6}}{\sqrt[3]{\sqrt[2]{\left(\frac{3}{4}\right)^{(2-1)2/2}(2!)^{-1}+1}-1}}+1\\&\approx9880.98\\&\geq9880
\end{align}
if $\ell_1$ and $\ell_2$ are integers such that $0\leq\ell_1<\ell_2\leq\frac{k-4}{2}$, then the periods $\mathfrak{r}_{2\ell_1,\vee^2}$ and $\mathfrak{r}_{2\ell_2,\vee^2}$ are linearly independent. Denote by $c_{k,r}(m)$ the $m^{\rm th}$ Fourier coefficient of $c_{k,r}$. We verified the nonsingularity of the matrix 
\begin{align}C_{k;\ell_1,\ell_2}:=
\begin{bmatrix}
    c_{k,k-1-2\ell_1}(1) & c_{k,k-1-2\ell_1}(4)\\c_{k,k-1-2\ell_2}(1) & c_{k,k-1-2\ell_2}(4)
\end{bmatrix}    \label{eq:matrixcononormalization}
\end{align}
for $12\leq k\leq 9882, k\neq 14$, and $0\leq\ell_1<\ell_2\leq\frac{k-4}{2}$ by \href{https://www.sagemath.org}{SageMath}, see \cite{githubNi} for the codes. Thus, the proof of Theorem \ref{thm:twolinearindependent} is complete. 
\end{proof}

We last prove Theorem \ref{thm:firstlogk}. 
\begin{lemma}\label{prop:boundofsup}
Let $n\ge1$ and $k\geq e^{12}$. If $0\leq\ell_1<\cdots<\ell_n\leq\frac{k-2}{4}$ are integers and $n\leq\frac{\log k}{4}$, then
 \begin{align}
        \sup_{\tau\in S_n}|f(\varepsilon_{k,1,\tau(1)},...,\varepsilon_{k,n,\tau(n)})|\leq k^{\left[\frac{\log(k-1)-\log(32\pi\cdot4^{-13/6})}{\log k}-\frac{13\log4}{24}-\frac{2(\log(5(\log k)^2)-\log4)}{k\log k}\right]\cdot\frac{-k}{2}}.
    \end{align}
\end{lemma}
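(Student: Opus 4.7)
The plan is to combine the pointwise bound on each $|\varepsilon_{k,i,j}|$ coming from Proposition~\ref{prop:estimateofFourierco} with the elementary inequality in Lemma~\ref{lem:inequalityforf}. The opaque exponent in the statement has a clean interpretation: after taking logarithms and collecting terms, one checks directly that the stated upper bound equals
\[
\frac{5(\log k)^{2}}{4}\left(\frac{32\pi\cdot 4^{-13/6}\,k^{13(\log 4)/24}}{k-1}\right)^{k/2}.
\]
Thus the real goal is to bound $\sup_{\tau}|f(\varepsilon_{k,1,\tau(1)},\ldots,\varepsilon_{k,n,\tau(n)})|$ by this product.

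Setting $m=4^{j-1}$ and $r_{i}=k-1-2\ell_{i}$, the hypothesis $\ell_{i}\le (k-2)/4$ forces $r_{i}\ge k/2$. Applying Proposition~\ref{prop:estimateofFourierco} and repeating the device from the proof of Proposition~\ref{prop:nonsingularityasymptotic}---namely $\log 4m\le 2\sqrt{4m}$ together with $m^{1/2}\le m^{r_{i}/6}$ (valid since $r_i\ge k/2\ge 3$)---I would obtain
\[
|\varepsilon_{k,i,j}|\;\le\;20\left(\frac{32\pi\cdot 4^{13(j-1)/6}}{k-1}\right)^{r_{i}}.
\]
The condition $n\le (\log k)/4$ gives $4^{13(j-1)/6}\le 4^{-13/6}k^{13(\log 4)/24}$, and a direct numerical check with $k\ge e^{12}$ shows that the resulting base is strictly less than $1$. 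Since the base lies in $(0,1)$, the upper bound is largest at the smallest $r_{i}=k/2$ and at $j=n$, producing the uniform estimate
\[
M:=\sup_{i,j}|\varepsilon_{k,i,j}|\;\le\;20\left(\frac{32\pi\cdot 4^{-13/6}\,k^{13(\log 4)/24}}{k-1}\right)^{k/2}.
\]

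Finally, Lemma~\ref{lem:inequalityforf} gives $\sup_{\tau}|f(\cdots)|\le (1+M)^{n}-1$, and since $M$ is super-exponentially small while $n\le (\log k)/4$, we have $nM\ll 1$ and hence $(1+M)^{n}-1\le e^{nM}-1\le 2nM$. Plugging in the bound for $M$ yields
\[
\sup_{\tau}|f(\cdots)|\;\le\;10(\log k)\left(\frac{32\pi\cdot 4^{-13/6}\,k^{13(\log 4)/24}}{k-1}\right)^{k/2},
\]
and $k\ge e^{12}\ge e^{8}$ forces $10\log k\le \tfrac{5(\log k)^{2}}{4}$, completing the estimate. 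The real analytic input is already contained in Proposition~\ref{prop:estimateofFourierco}; the main obstacle here is purely bookkeeping---verifying the algebraic identity displayed in the first paragraph, which reveals that the strange-looking exponent in the statement was engineered precisely so that $k^{E}$ packages the prefactor $\tfrac{5(\log k)^{2}}{4}$ together with the base raised to the $k/2$.
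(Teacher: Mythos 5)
Your proposal is correct and follows essentially the same route as the paper: the same pointwise bound $|\varepsilon_{k,i,j}|\le 20\bigl(32\pi\cdot 4^{13(j-1)/6}/(k-1)\bigr)^{k-1-2\ell_i}$ from Proposition \ref{prop:estimateofFourierco}, the same use of $j\le n\le\frac{\log k}{4}$ and $k-1-2\ell_i\ge\frac k2$, and the same repackaging of the prefactor into the exponent of $k$ (your identity in the first paragraph is exactly the paper's last display read backwards). The only cosmetic difference is that you bound $(1+M)^n-1\le e^{nM}-1\le 2nM$ (giving prefactor $10\log k$) where the paper uses the variant bound $|f|\le n^2M$ for $nM\le 1$ (giving $\tfrac{5(\log k)^2}{4}$); your "$nM\ll1$" should be quantified as in the paper, but that is immediate from your bound on $M$.
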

\begin{proof}
First, we find a uniform bound for $\varepsilon_{k,i,j} (1\leq i,j\leq n)$.
Let $m=4^{j-1}$ and $r=k-1-2\ell_i$. By Proposition \ref{prop:estimateofFourierco}, we have
\begin{align}
   |\varepsilon_{k,i,j}|&\leq5\left(\frac{32\pi}{k-1}\right)^rm^{2r}\log4m\\&\leq5\left(\frac{32\pi}{k-1}\right)^rm^{2r}2(4m)^{1/2}\\&\leq20\left(\frac{32\pi}{k-1}\right)^rm^{2r}\cdot m^{r/6}\\&=20\left(\frac{32\pi\cdot4^{13(j-1)/6}}{k-1}\right)^{k-1-2\ell_i}.\label{eq:errortermforl<k-2/4}
\end{align}
Since $j\leq\frac{\log k}{4}$, we have
   \begin{align}
       \frac{32\pi\cdot4^{13(j-1)/6}}{k-1}&=\frac{32\pi\cdot4^{-13/6}\cdot4^{13j/6}}{k-1}\\&\leq\frac{32\pi\cdot4^{-13/6}\cdot4^{13\log k/24}}{k-1}\\&=
       k^{-\frac{\log(k-1)-\log(32\pi\cdot4^{-13/6})}{\log k}+\frac{13\log4}{24}}.\label{eq:errortermforl<k-2/4,2}
   \end{align}
Note that in particular,
\begin{align}
    k^{-\frac{\log(k-1)-\log(32\pi\cdot4^{-13/6})}{\log k}+\frac{13\log4}{24}}\leq k^{-\frac{\log(e^{12}-1)-\log(32\pi\cdot4^{-13/6})}{\log e^{12}}+\frac{13\log 4}{24}}\leq k^{-0.115}<1.\label{eq:middlestepforuseoflem}
\end{align}
As $k-1-2\ell_i\geq \frac{k}{2}$, \eqref{eq:errortermforl<k-2/4} and \eqref{eq:errortermforl<k-2/4,2} imply that 
\begin{align}
|\varepsilon_{k,i,j}|&\leq 20k^{\left[\frac{\log(k-1)-\log(32\pi\cdot4^{-13/6})}{\log k}-\frac{13\log4}{24}\right]\cdot\frac{-k}{2}}\label{eq:boundofnewerror}
\end{align}
We now give the bound of  $\sup_{\tau\in S_n}|f(\varepsilon_{k,1,\tau(1)},...,\varepsilon_{k,n,\tau(n)})|$. To simplify the proof, we will use a different bound of $f$ defined in \eqref{eq:defoff}. By a similar argument as \eqref{eq:boundoff}, if $|x_i|\leq M$ for all $1\leq i\leq n$ and $nM\leq1$ then
\begin{align}
    |f(x_1,...,x_n)|&\leq\binom{n}{1}M+\binom{n}{2}M^2+\cdots+\binom{n}{n}M^n\leq \sum_{i=1}^n (nM)^i\leq n^2M.\label{eq:newboundoff}
\end{align}
Note that \eqref{eq:boundofnewerror} and \eqref{eq:middlestepforuseoflem} imply that 
\begin{align}
   |\varepsilon_{k,i,j}|\leq20\left(\frac{1}{k^{0.115}}\right)^{k/2}\leq \frac{4}{\log k}\leq \frac{1}{n}.
\end{align}
Hence by \eqref{eq:newboundoff} and \eqref{eq:boundofnewerror}, 
 \begin{align}
        \sup_{\tau\in S_n}|f(\varepsilon_{k,1,\tau(1)},...,\varepsilon_{k,n,\tau(n)})|&\leq n^2\cdot20k^{\left[\frac{\log(k-1)-\log(32\pi\cdot4^{-13/6})}{\log k}-\frac{13\log4}{24}\right]\cdot\frac{-k}{2}}\\&\leq\frac{5(\log k)^2}{4}\cdot k^{\left[\frac{\log(k-1)-\log(32\pi\cdot4^{-13/6})}{\log k}-\frac{13\log4}{24}\right]\cdot\frac{-k}{2}}\\&=k^{\left[\frac{\log(k-1)-\log(32\pi\cdot4^{-13/6})}{\log k}-\frac{13\log4}{24}-\frac{2(\log(5(\log k)^2)-\log4)}{k\log k}\right]\cdot\frac{-k}{2}},
    \end{align}
    as desired.
\end{proof}  
Theorem \ref{thm:firstlogk} follows from the next result.
\begin{proposition}\label{prop:nonsingularfarfromcenter}
Let $n\geq1$ and $k\geq e^{12}$. If $0\leq\ell_1<\ell_2<\cdots<\ell_n\leq\frac{k-2}{4}$ and $n\leq\frac{\log k}{4}$, then $A_{k;\ell_1,...,\ell_n}$ is nonsingular.
  \end{proposition}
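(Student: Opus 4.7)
The plan is to apply the criterion of Proposition \ref{prop:criterionofnonsingularity}: nonsingularity of $A_{k;\ell_1,\ldots,\ell_n}$ is guaranteed once we verify
\begin{align}
\sup_{\tau\in S_n}|f(\varepsilon_{k,1,\tau(1)},\ldots,\varepsilon_{k,n,\tau(n)})| < \left(\frac{3}{4}\right)^{n(n-1)/2}(n!)^{-1}.
\end{align}
Lemma \ref{prop:boundofsup} already supplies an explicit bound for the left-hand side of the form $k^{-E(k)\cdot k/2}$, where
\begin{align}
E(k) := \frac{\log(k-1)-\log(32\pi\cdot 4^{-13/6})}{\log k} - \frac{13\log 4}{24} - \frac{2(\log(5(\log k)^2)-\log 4)}{k\log k}.
\end{align}
So the entire proposition reduces to the numerical inequality $k^{-E(k)\cdot k/2} < (3/4)^{n(n-1)/2}(n!)^{-1}$, which after taking $-\log$ becomes
\begin{align}
E(k)\cdot\tfrac{k\log k}{2} > \tfrac{n(n-1)}{2}\log(4/3) + \log(n!).
\end{align}

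First I would isolate a clean uniform lower bound for $E(k)$ on $[e^{12},\infty)$. The first summand $(\log(k-1)-\log(32\pi\cdot 4^{-13/6}))/\log k$ is increasing in $k$ and already $\geq 0.866$ at $k=e^{12}$; the constant summand is $13\log 4/24 \approx 0.751$; the final correction decays like $1/(k\log k)$ and is tiny for $k\geq e^{12}$. This already appeared in the proof of Lemma \ref{prop:boundofsup} in the form of the crude bound $E(k)\geq 0.115$ (cf.\ \eqref{eq:middlestepforuseoflem}). I would simply re-extract it here and write $E(k) k\log k/2 \geq 0.115 \cdot k\log k/2$ for every $k\geq e^{12}$.

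Next I would bound the right-hand side using $n\leq \log k/4$. Writing $\log(n!) \leq n\log n$ gives
\begin{align}
\tfrac{n(n-1)}{2}\log(4/3) + \log(n!) \;\leq\; \tfrac{(\log k)^2}{32}\log(4/3) + \tfrac{\log k}{4}\log\tfrac{\log k}{4},
\end{align}
which is $O((\log k)^2)$. The remaining inequality $0.115\cdot k\log k/2 > O((\log k)^2)$ is equivalent to $k/\log k$ being at least an explicit constant, which is trivial for $k\geq e^{12}$ (since $e^{12}/12 \gg 1$). A one-line monotonicity check then extends the inequality to all $k\geq e^{12}$, and Proposition \ref{prop:criterionofnonsingularity} gives nonsingularity.

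The main obstacle is purely bookkeeping: packaging the dominance $k\log k \gg (\log k)^2$ into a uniform inequality involving the somewhat awkward function $E(k)$. There is no conceptual difficulty beyond what is already present in Lemma \ref{prop:boundofsup}; in particular, no new estimates for Fourier coefficients or class numbers are required. Once the constant $0.115$ (or any fixed positive lower bound for $E(k)$) is in hand, the final step is a one-variable calculus inequality that holds with enormous room to spare.
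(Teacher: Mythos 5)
Your proposal is correct and follows essentially the same route as the paper: both rest on Lemma \ref{prop:boundofsup} together with the criterion of Proposition \ref{prop:criterionofnonsingularity}, and both reduce the matter to the uniform bound $E(k)\geq 0.115$ for $k\geq e^{12}$. The only (harmless) difference is in the final numerical comparison: the paper lower-bounds $\left(\tfrac{3}{4}\right)^{n(n-1)/2}(n!)^{-1}$ by $k^{-\frac{\log(4/3)}{4}\cdot\frac{k}{2}}$ via Stirling's formula and compares exponents, whereas you take logarithms and use $\log(n!)\leq n\log n$ with $n\leq\tfrac{\log k}{4}$, which is slightly more elementary and equally valid.
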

  \begin{proof}
      First, we give a lower bound of the right hand side of \eqref{eq:conditionfordetneq0}. Recall the non-asymptotic Stirling's approximation \cite{Stirling's-formula}, which states that for all $n\geq1$
\begin{align} 
    \sqrt{2\pi n}\left(\frac{n}{e}\right)^ne^{\frac{1}{12n+1}}<n!<   \sqrt{2\pi n}\left(\frac{n}{e}\right)^ne^{\frac{1}{12n}}.
\end{align}
In particular, we have
\begin{align}\sqrt{2\pi}<\frac{n!e^{n}}{n^{n+\frac{1}{2}}}\leq e.\label{eq:Stirling}\end{align}
Using \eqref{eq:Stirling} and the fact that $(e/x)^x$ decreases for $x>0$, we get
      \begin{align}
          \left(\frac{3}{4}\right)^{n(n-1)/2}\cdot(n!)^{-1}&\geq\left(\frac{3}{4}\right)^{n(n-1)/2}\frac{1}{e\sqrt{2\pi n}}\left(\frac{e}{n}\right)^n\\&\geq\left(\frac{3}{4}\right)^{\frac{\log k}{4}\left(\frac{\log k}{4}-1\right)/2}\frac{2}{e\sqrt{2\pi\log k}}\left(\frac{4e}{\log k}\right)^{\frac{\log k}{4}}\\&=k^{\frac{-\log(4/3)}{4}\cdot\left(\frac{\log k}{4}-1\right)/2}\frac{2}{e\sqrt{2\pi\log k}}\left(\frac{4e}{\log k}\right)^{\frac{\log k}{4}}.
      \end{align}
It is not difficult to see that for all $k\geq e^{12}$, we have
\begin{align}
    \frac{2}{e\sqrt{2\pi\log k}}\geq\frac{1}{k^{\log(4/3)}}\quad{\rm and}\quad \left(\frac{4e}{\log k}\right)^{\frac{\log k}{4}}\geq k^{\frac{-\log(4/3)}{4}\cdot\frac{\log k}{4}},
\end{align}
which implies that
\begin{align}
          \left(\frac{3}{4}\right)^{n(n-1)/2}\cdot(n!)^{-1}&\geq k^{\frac{-\log(4/3)}{4}\cdot\left[\left(\frac{\log k}{4}-1\right)/2+4+\frac{\log k}{4}\right]}\\&=k^{\frac{\log(4/3)}{4}\cdot(-\frac{3\log k}{8}-\frac{7}{2})}\\&\geq k^{\frac{\log(4/3)}{4}\cdot\frac{-k}{2}}.
          \end{align}
On the other hand, 
 \begin{align}
     \sup_{\tau\in S_n}|f(\varepsilon_{k,1,\tau(1)},...,\varepsilon_{k,n,\tau(n)})|\leq k^{\left[\frac{\log(k-1)-\log(32\pi\cdot4^{-13/6})}{\log k}-\frac{13\log4}{24}-\frac{2(\log(5(\log k)^2)-\log4)}{k\log k}\right]\cdot\frac{-k}{2}}.    
    \end{align}
    Note that for $k\geq e^{12}$, we have
    \begin{align}
        \frac{\log(k-1)-\log(32\pi\cdot4^{-13/6})}{\log k}-\frac{13\log4}{24}-\frac{2(\log(5(\log k)^2)-\log4)}{k\log k}\geq 0.115>\frac{\log(4/3)}{4},
    \end{align}
    which implies that 
     \begin{align} \sup_{\tau\in S_n}|f(\varepsilon_{k,1,\tau(1)},...,\varepsilon_{k,n,\tau(n)})|\leq k^{0.115\cdot\frac{-k}{2}}<k^{\frac{\log(4/3)}{4}\cdot\frac{-k}{2}}\leq    \left(\frac{3}{4}\right)^{n(n-1)/2}\cdot(n!)^{-1}\end{align}
Therefore, $A_{k;\ell_1,...,\ell_n}$ is nonsingular by Proposition \ref{prop:criterionofnonsingularity}.
  \end{proof}
  \section{Discussions}\label{sect:discussion}
Since the lower bound for $k$ in Proposition \ref{prop:nonsingularityasymptotic} is gigantic, a natural question is how much we can improve it. 
As mentioned in \cite[Remark 2.9]{oddperiods}, there is another natural choice of matrix to investigate the linear independence of the periods. Recall that $a_{k,r}(m)$ denotes the $m^{{\rm th}}$ Fourier coefficient of $f_{k,r}$ defined by \eqref{eq:defoffr}. We define
 \begin{align}
    B_{k;\ell_1,...,\ell_n}:&=\left[a_{k,k-1-2\ell_i}(j^2)\right]_{1\leq i,j\leq n}\\&=
\left[j^{2\ell_i}(1+\eta_{k,i,j})\right]_{1\leq i,j\leq n},  
\end{align}
where $\eta_{k,i,j}$ goes to zero as $k$ tends to infinity. 
Then the ``limit" matrix becomes
\begin{align}
    \mathscr{B}_{k;\ell_1,...,\ell_n}:=\begin{bmatrix}
        1 & 2^{2\ell_1} & 3^{2\ell_1} & \cdots & n^{2\ell_1}\\
        1 & 2^{2\ell_2} & 3^{2\ell_2} & \cdots & n^{2\ell_2}\\
        \vdots & \vdots & \vdots & \ddots & \vdots\\
        1 & 2^{2\ell_{n-1}} & 3^{2\ell_{n-1}} & \cdots & n^{2\ell_{n-1}}\\ 
        1 & 2^{2\ell_n} & 3^{2\ell_n} & \cdots & n^{2\ell_n}
    \end{bmatrix},\label{eq:generalizaedVandermonde}
\end{align}
which is a generalized Vandermonde matrix \cite[Vol.2, p.99]{Gantmacher1959}. If one can get a good result about the lower bound of the determinant of \eqref{eq:generalizaedVandermonde}, then the upper bound for $\sup_{\tau\in S_n}|f(\varepsilon_{k,1,\tau(1)},...,\varepsilon_{k,n,\tau(n)})|$ in \eqref{eq:conditionfordetneq0} becomes bigger, which will lead to a smaller lower bound for $k$ in Proposition \ref{prop:nonsingularityasymptotic}.  It is reasonable to propose the following conjecture.
\begin{conjecture} \label{conj,1}
    Let $0\leq\ell_1<\ell_2<\cdots<\ell_n\leq\frac{k-4}{2}$. If $n\leq\dim S_k$, then the set of periods $\{\mathfrak{r}_{2\ell_i,\vee^2}\}_{i=1}^n$ is linearly independent.
\end{conjecture}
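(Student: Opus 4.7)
The plan is to adapt the matrix method of Sections \ref{sect:matrix}--\ref{sect:proofs}, working with the generalized Vandermonde matrix $B_{k;\ell_1,\ldots,\ell_n}$ from the Discussion instead of $A_{k;\ell_1,\ldots,\ell_n}$ and replacing the authors' pointwise estimates with substantially sharper ones. By Proposition \ref{prop:equivalence}, it is equivalent to show that $\{c_{k,k-1-2\ell_i}\}_{i=1}^n$ are linearly independent in $S_k$. I would form the $n\times n$ matrix
\[
B := \bigl[a_{k,k-1-2\ell_i}(j^2)\bigr]_{1\le i,j\le n} = \bigl[j^{2\ell_i}(1+\eta_{k,i,j})\bigr]_{1\le i,j\le n},
\]
whose leading part is the generalized Vandermonde $\mathscr{B}_{k;\ell_1,\ldots,\ell_n}$ from \eqref{eq:generalizaedVandermonde}. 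By the bialternant formula, $\det\mathscr{B}_{k;\ell_1,\ldots,\ell_n}$ equals a Schur polynomial evaluated at $(1,2,\ldots,n)$ times $\prod_{1\le p<q\le n}(q-p)$, so it is strictly positive. The first task is to deduce an explicit lower bound on $|\det\mathscr{B}_{k;\ell_1,\ldots,\ell_n}|$ comparable to the product of its dominant entries $\prod_{i=1}^n i^{2\ell_i}$.

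The second and more delicate task is to bound $\eta_{k,i,j}=\delta_{k,j^2,k-1-2\ell_i}$ much more sharply than in Proposition \ref{prop:estimateofFourierco}. The weakness of that bound stems from Lemma \ref{lem:boundP_k,r}, whose denominator $(4m-t^2)^{r/2}$ is singular as $t^2\to 4m$. The natural replacement is the Gegenbauer identity
\[
P_{k,r}(t,m) = m^{(k-r-1)/2}\,C_{k-r-1}^{(r)}\!\left(\tfrac{t}{2\sqrt{m}}\right),
\]
combined with the classical uniform bound $|C_n^{(r)}(\cos\theta)|\le C_n^{(r)}(1) = \binom{n+2r-1}{2r-1}$ valid for integer $r\ge 1$. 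This eliminates the spurious singularity and gives $|P_{k,r}(t,m)|\le m^{(k-r-1)/2}\binom{k+r-2}{2r-1}$ uniformly in $t^2\le 4m$, recovering Lemma \ref{lem:P_k,rperfectsquare} as the equality case. Coupled with a finer, on-average estimate for $\sum_{t^2<4m}|H(r,4m-t^2)|$ (rather than the termwise bound in Lemma \ref{lem:boundsforclassnumber}), this should produce error estimates sufficient to apply an analogue of Proposition \ref{prop:criterionofnonsingularity} all the way up to $n\le \dim S_k$.

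The main obstacle is the extreme regime where $n\approx\dim S_k\approx k/12$ and the largest $\ell_i$ approaches $(k-4)/2$, so that $r=k-1-2\ell_i$ can be as small as $3$ while $m=j^2$ can be as large as $k^2/144$. For these parameters the sharpened Gegenbauer bound on $P_{k,r}$ is essentially tight, while the trivial class number bound loses a factor that is polynomial in $k$ per entry, so a purely absolute-value strategy appears to fall short by an exponentially large amount overall. Closing the gap will probably require either a genuine cancellation in the oscillatory sum $\sum_{t^2<4m}P_{k,r}(t,m)H(r,4m-t^2)$ --- since $C_{k-r-1}^{(r)}$ changes sign in $t$ when $r$ is small --- or a completely different approach through the Petersson Gram matrix $\bigl[\langle c_{k,k-1-2\ell_i},c_{k,k-1-2\ell_j}\rangle\bigr]_{i,j}$, whose entries by Lemma \ref{lem:integralperiods} reduce to symmetric square $L$-values at the critical points and whose nonsingularity might be established by exploiting the Rankin--Selberg integral representation of $L_{2,f}$ directly.
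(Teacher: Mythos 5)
What you have written is not a proof, and it cannot be compared with one in the paper, because the statement you are addressing is precisely Conjecture \ref{conj,1}: the authors leave it open, prove only the case $n=2$ unconditionally (Theorem \ref{thm:twolinearindependent}), and in Section \ref{sect:discussion} they themselves propose exactly the route you sketch --- replacing $A_{k;\ell_1,\ldots,\ell_n}$ by $B_{k;\ell_1,\ldots,\ell_n}$ with limit matrix the generalized Vandermonde \eqref{eq:generalizaedVandermonde} --- and stop there for the same reason you do. So your proposal essentially reproduces the paper's own unexecuted plan, and the honest caveat in your last paragraph is the whole point: the argument has a genuine, unfilled gap, not a technical loose end.

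Concretely, the gap is in the regime you identify but do not resolve. When $\ell_i$ is close to $\frac{k-4}{2}$, so $r=k-1-2\ell_i$ is as small as $3$, and $j$ runs up to $n\approx\dim S_k\approx k/12$, so $m=j^2\asymp k^2$, the quantity $\eta_{k,i,j}=\delta_{k,j^2,k-1-2\ell_i}$ is not small at all: even with your sharpened Gegenbauer bound $|P_{k,r}(t,m)|\le\binom{k+r-2}{2r-1}m^{(k-r-1)/2}$, the ratio of a single off-diagonal term $P_{k,r}(t,m)H(r,4m-t^2)$ to the main term $\binom{k+r-2}{2r-1}m^{(k-r-1)/2}H(r,0)$ is governed by $|H(r,4m-t^2)/\zeta(1-2r)|$, which for $r=3$ and $N=4m-t^2\asymp k^2$ is of size roughly $N^{r-1/2}\asymp k^{5}$, and there are $\asymp k$ values of $t$; so the purported ``error'' can exceed the main term by a power of $k$ unless the sign changes of the Gegenbauer polynomial in $t$ produce genuine cancellation, which you do not establish. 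At the same time, any perturbative nonsingularity criterion in the style of Proposition \ref{prop:criterionofnonsingularity} requires the errors to beat something like $(n!)^{-1}$ times a determinant-to-dominant-monomial ratio, i.e.\ to be super-exponentially small in $k$ when $n\asymp k/12$, so an absolute-value argument is structurally incapable of closing this; the two alternative ideas you mention (exploiting oscillation in $t$, or the Petersson Gram matrix of the $c_{k,k-1-2\ell_i}$ via Lemma \ref{lem:integralperiods}) are plausible directions but are only named, not carried out. Until one of them is made to work, the conjecture remains exactly that.
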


Note that Theorem \ref{thm:twolinearindependent} provides evidence for  Conjecture \ref{conj,1} when $n=2$. The case $n=1$ is also true since we verified the nonsingularity of the matrix $C_{k,\ell_1,\ell_2}$ defined in \eqref{eq:matrixcononormalization} for $12\leq k\leq 9882, k\neq14$.
There is another question related to the set of periods $\{\mathfrak{r}_{2\ell,\vee^2}\}_{\ell=0}^{(k-4)/2}$, that is, can this set span the whole space of $S_k^{\ast}$? We believe that the answer is affirmative, which also follows from Conjecture \ref{conj,1}, but we are only able to show the following corollary of Theorem \ref{thm:firstlogk}.

\begin{corollary}
    Suppose $k\ge e^{12}$ is even. Then
    \begin{align}
        \dim \Span\left( \{\mathfrak{r}_{2\ell,\vee^2}\}_{\ell=0}^{(k-4)/2}\right)\ge \left\lfloor\frac{\log k}{4}\right\rfloor.
    \end{align}
\end{corollary}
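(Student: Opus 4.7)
The plan is to deduce the corollary as an immediate consequence of Theorem \ref{thm:firstlogk}. Set $n := \lfloor \log k / 4 \rfloor$. Since $k \geq e^{12}$ we have $n \geq 3 \geq 1$, so $n$ is a legitimate positive integer, and by construction $n \leq \log k / 4$, which is the cardinality hypothesis of Theorem \ref{thm:firstlogk}.

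Next I would exhibit $n$ specific indices and check they lie in the admissible range. The natural choice is $\ell_i = i - 1$ for $i = 1, \ldots, n$, so that $0 \leq \ell_1 < \ell_2 < \cdots < \ell_n$ trivially. The only thing to verify is $\ell_n \leq (k-2)/4$, i.e.\ $n - 1 \leq (k-2)/4$. Since $n - 1 \leq \log k / 4$, it suffices to check $\log k \leq k - 2$, which holds for all $k \geq e^{12}$ by a very wide margin. In particular $\ell_n \leq (k-2)/4 \leq (k-4)/2$ as well, so the indices lie in the range required both here and in the definition of the periods.

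With these choices, Theorem \ref{thm:firstlogk} applies and yields that $\{\mathfrak{r}_{2\ell_i,\vee^2}\}_{i=1}^{n}$ is linearly independent in $S_k^{\ast}$. Since each $\ell_i \in \{0, 1, \ldots, (k-4)/2\}$, these $n$ functionals all belong to $\{\mathfrak{r}_{2\ell,\vee^2}\}_{\ell=0}^{(k-4)/2}$, so
\begin{align}
\dim \Span\left(\{\mathfrak{r}_{2\ell,\vee^2}\}_{\ell=0}^{(k-4)/2}\right) \geq n = \left\lfloor \frac{\log k}{4} \right\rfloor,
\end{align}
which is the claim. There is no genuine obstacle here: the corollary is essentially a restatement of Theorem \ref{thm:firstlogk} once one confirms that $\lfloor \log k / 4 \rfloor$ distinct integers can fit in $[0, (k-2)/4]$, which is automatic for $k \geq e^{12}$.
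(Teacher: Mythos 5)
Your proof is correct and matches the paper's approach: the paper simply declares the corollary a direct consequence of Theorem \ref{thm:firstlogk}, and your write-up just makes explicit the (routine) choice $n=\lfloor\log k/4\rfloor$, $\ell_i=i-1$, and the check that these indices fit in $[0,(k-2)/4]$.
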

\begin{proof}
    This is a direct corollary of Theorem \ref{thm:firstlogk}.
\end{proof}

Another natural question about the periods $\mathfrak{r}_{2\ell,\vee^2}$ is whether we can find Eichler-Shimura 
type linear relations \eqref{eq:ES2} and \eqref{eq:ES3} among them. We hope to take it up in a future work.

\section*{Ackowledgements}
The authors would like to thank the referee for detailed comments and insightful advice.
\bibliographystyle{plain}
  
  \vspace{.2in}


\providecommand{\bysame}{\leavevmode\hbox
to3em{\hrulefill}\thinspace}

\bibliography{ref}

\begin{thebibliography}{10}

\bibitem{ApostolNT}
Tom~M. Apostol.
\newblock {\em Introduction to analytic number theory}.
\newblock Undergraduate Texts in Mathematics. Springer-Verlag, New York-Heidelberg, 1976.

\bibitem{Cohen'smodularformC_k}
Henri Cohen.
\newblock Sums involving the values at negative integers of {$L$}-functions of quadratic characters.
\newblock {\em Math. Ann.}, 217(3):271--285, 1975.

\bibitem{Delignemotive}
P.~Deligne.
\newblock Valeurs de fonctions {$L$} et p\'{e}riodes d'int\'{e}grales.
\newblock In {\em Automorphic forms, representations and {$L$}-functions ({P}roc. {S}ympos. {P}ure {M}ath., {O}regon {S}tate {U}niv., {C}orvallis, {O}re., 1977), {P}art 2,}, Proc. Sympos. Pure Math., XXXIII,, pages 313--346,. ,, 1979.
\newblock With an appendix by N. Koblitz and A. Ogus.

\bibitem{Gantmacher1959}
F.~R. Gantmacher.
\newblock {\em The theory of matrices. {V}ols. 1, 2}.
\newblock Chelsea Publishing Co., New York, 1959.
\newblock Translated by K. A. Hirsch.

\bibitem{Hida1990}
Haruzo Hida.
\newblock {$p$}-adic {$L$}-functions for base change lifts of {${\rm GL}_2$} to {${\rm GL}_3$}.
\newblock In {\em Automorphic forms, {S}himura varieties, and {$L$}-functions, {V}ol. {II} ({A}nn {A}rbor, {MI}, 1988)}, volume~11 of {\em Perspect. Math.}, pages 93--142. Academic Press, Boston, MA, 1990.

\bibitem{Kohnen1984}
Winfried Kohnen and Don Zagier.
\newblock Modular forms with rational periods.
\newblock In {\em Modular forms ({D}urham, 1983)}, Ellis Horwood Ser. Math. Appl.: Statist. Oper. Res., pages 197--249. Horwood, Chichester, 1984.

\bibitem{LangbookandZagierappendix}
Serge Lang.
\newblock {\em Introduction to modular forms}, volume 222 of {\em Grundlehren der mathematischen Wissenschaften [Fundamental Principles of Mathematical Sciences]}.
\newblock Springer-Verlag, Berlin, 1995.
\newblock With appendixes by D. Zagier and Walter Feit, Corrected reprint of the 1976 original.

\bibitem{evenperiods}
Austin Lei, Tianyu Ni, and Hui Xue.
\newblock Linear independence of even periods of modular forms.
\newblock {\em J. Number Theory}, 248:120--139, 2023.

\bibitem{oddperiods}
Austin Lei, Tianyu Ni, and Hui Xue.
\newblock Linear independence of odd periods of modular forms.
\newblock {\em Res. Number Theory}, 9(2):Paper No. 33, 20, 2023.

\bibitem{Manin1973}
Ju.~I. Manin.
\newblock Periods of cusp forms, and {$p$}-adic {H}ecke series.
\newblock {\em Mat. Sb. (N.S.)}, 92(134):378--401, 503, 1973.

\bibitem{ElementarymethodsNT}
Melvyn~B. Nathanson.
\newblock {\em Elementary methods in number theory}, volume 195 of {\em Graduate Texts in Mathematics}.
\newblock Springer-Verlag, New York, 2000.

\bibitem{Newton20211}
James Newton and Jack~A. Thorne.
\newblock Symmetric power functoriality for holomorphic modular forms.
\newblock {\em Publ. Math. Inst. Hautes \'{E}tudes Sci.}, 134:1--116, 2021.

\bibitem{Newton20212}
James Newton and Jack~A. Thorne.
\newblock Symmetric power functoriality for holomorphic modular forms, {II}.
\newblock {\em Publ. Math. Inst. Hautes \'{E}tudes Sci.}, 134:117--152, 2021.

\bibitem{githubNi}
Tianyu Ni.
\newblock Sage code for computations of the coefficient matrix.
\newblock \url{https://github.com/TianyuNi1994math/LINEAR-INDEPENDENCE-OF-PERIODS-FOR-THE-SECOND-SYMMETRIC-POWER-OF-L-FUNCTIONS}.

\bibitem{Stirling's-formula}
Herbert Robbins.
\newblock A remark on {S}tirling's formula.
\newblock {\em Amer. Math. Monthly}, 62:26--29, 1955.

\bibitem{SerreconjectureaboutsymL}
Jean-Pierre Serre.
\newblock Une interpr\'etation des congruences relatives \`a la fonction $\tau $ de {Ramanujan}.
\newblock {\em S\'eminaire Delange-Pisot-Poitou. Th\'eorie des nombres}, 9(1), 1967-1968.
\newblock talk:14.

\bibitem{SerretraceofHecke}
Jean-Pierre Serre.
\newblock R\'{e}partition asymptotique des valeurs propres de l'op\'{e}rateur de {H}ecke {$T_p$}.
\newblock {\em J. Amer. Math. Soc.}, 10(1):75--102, 1997.

\bibitem{functionalequationforL(Sym^2)}
Goro Shimura.
\newblock On the holomorphy of certain {D}irichlet series.
\newblock {\em Proc. London Math. Soc. (3)}, 31(1):79--98, 1975.

\bibitem{specialvaluesShimura}
Goro Shimura.
\newblock The special values of the zeta functions associated with cusp forms.
\newblock {\em Comm. Pure Appl. Math.}, 29(6):783--804, 1976.

\bibitem{periodsShimura}
Goro Shimura.
\newblock On the periods of modular forms.
\newblock {\em Math. Ann.}, 229(3):211--221, 1977.

\bibitem{Sturmspecialvalues}
Jacob Sturm.
\newblock Special values of zeta functions, and {E}isenstein series of half integral weight.
\newblock {\em Amer. J. Math.}, 102(2):219--240, 1980.

\bibitem{Xueoddandevenperiods}
Hui Xue.
\newblock Linear independence between odd and even periods of modular forms.
\newblock {\em New York J. Math.}, 30:521--531, 2024.

\bibitem{Zagier1977M}
Don Zagier.
\newblock Modular forms whose {F}ourier coefficients involve zeta-functions of quadratic fields.
\newblock In {\em Modular functions of one variable, {VI} ({P}roc. {S}econd {I}nternat. {C}onf., {U}niv. {B}onn, {B}onn, 1976)}, pages 105--169. Lecture Notes in Math., Vol. 627, 1977.

\end{thebibliography}
\end{document}